
\documentclass[10pt]{amsart}
\usepackage[T1]{fontenc}
\usepackage[latin1]{inputenc}
\usepackage{amsfonts}
\usepackage{amssymb}
\usepackage{amsmath}
\usepackage{amsthm}
\usepackage{mathtools}
\usepackage{latexsym}
\usepackage{color}
\usepackage{amscd}
\usepackage[all,cmtip]{xy}
\usepackage{graphicx}
\usepackage{url}
\usepackage[margin=1in]{geometry}
\usepackage{graphicx}
\usepackage{esint}
\usepackage[usenames,dvipsnames,svgnames,table]{xcolor}
\usepackage[colorlinks=true,
pdfstartview=FitV,linkcolor=ForestGreen,citecolor=ForestGreen,
urlcolor=blue]{hyperref}
\setcounter{tocdepth}{1}

\allowdisplaybreaks

\numberwithin{equation}{section}
\newtheorem{theorem}{Theorem}
\newtheorem{proposition}[theorem]{Proposition}
\newtheorem{lemma}[theorem]{Lemma}

\theoremstyle{definition}
\newtheorem{definition}[theorem]{Definition}

\theoremstyle{remark}
\newtheorem{remark}[theorem]{Remark}
\newtheorem{remarks}[theorem]{Remarks}

\newcommand{\norm}[1]{\left\| #1 \right\|}
\newcommand\N{{\mathbb N}}
\newcommand\R{{\mathbb R}}

\newcommand{\cC}{\mathcal C}
\newcommand{\cQ}{\mathcal Q}

\newcommand{\cU}{\mathcal{U}}
\newcommand{\cK}{\mathcal K}
\newcommand{\cT}{\mathcal T}
\newcommand{\dd}{{\, \mathrm d}}

\newcommand{\var}{\varepsilon}

\newcommand{\jesscor}[1]{\textcolor{red}}

\title[Quantitative De Giorgi Methods in Kinetic
Theory]{Quantitative De Giorgi Methods in Kinetic Theory}

\author{Jessica Guerand}

\address[Jessica Guerand]{Université de Montpellier, IMAG, 499-554 Rue du Truel, 34090 Montpellier, France}
\email{jessica.guerand@umontpellier.fr}

\author{Cl\'ement Mouhot}

\address[Cl\'ement Mouhot]{University of Cambridge, Department of
  Pure Mathematics and Mathematical Statistics, Wilberforce Road,
  Cambridge CB3 0WA, United Kingdom}

\email{c.mouhot@dpmms.cam.ac.uk}
  
\date{\today}

\begin{document}

\begin{abstract}
  We consider hypoelliptic equations of kinetic Fokker-Planck
  type, also known as Kolmogorov or ultraparabolic equations,
  with rough coefficients in the drift-diffusion operator. We
  give novel short quantitative proofs of the De Giorgi
  intermediate-value Lemma as well as weak Harnack and Harnack
  inequalities. This implies H\"{o}lder continuity with quantitative
  estimates. The paper is self-contained.
\end{abstract}

\maketitle

\setcounter{tocdepth}{1}
\tableofcontents

\section{Introduction}

\label{sec:intro}

\subsection{The problem studied}
\label{sec:theequation}

This paper is concerned with local regularity properties, namely
boundedness, Harnack inequalities and H\"{o}lder continuity, of
solutions $f=f(t,x,v)$ to the following class of hypoelliptic
partial differential equations in divergence form
\begin{equation}
  \label{e:main}
  \partial_t f + v \cdot \nabla_x f = \nabla_v \cdot (
  A \nabla_v f) + B \cdot \nabla_v f + S,
  \quad t \in \R, \ x \in \R^d, \ v \in \R^d
\end{equation} 
where $A=A(t,x,v)$, $B=B(t,x,v)$ and $S=S(t,x,v)$ satisfy (for some
constants $0 < \lambda < \Lambda$):
\begin{equation}
  \label{e:hyp-coef}
  \begin{dcases} 
    \text{$A$ is a measurable symmetric real matrix field with
      eigenvalues in } [\lambda,\Lambda],\\
    \text{$B$ is a measurable vector field such that }
    |B| \le \Lambda,\\
    \text{$S$ is a real scalar field in $L^\infty$.}
  \end{dcases}
\end{equation} 
This equation naturally appears in kinetic theory where it is
refereed to as the \emph{kinetic Fokker-Planck equation}; it is
included in the class considered by Kolmogorov~\cite{kolmogorov}
(with constant $A$ and linear $B$) that inspired the theory of
hypoellipticity of H\"{o}rmander~\cite{MR0222474} (see
\cite{AP20}). The coefficients are called ``rough'' because $A$,
$B$ and $S$ in the drift-diffusion operator on the $v$ variable
are merely measurable with no further regularity.

Our class~\eqref{e:main}-\eqref{e:hyp-coef} is invariant under
translations in $t$, $x$ and under \emph{Galilean translations},
i.e. under $z \to z_0 \circ z$ where $z_0 =(t_0,x_0,v_0)$,
$z = (t,x,v)$ and with the non-commutative group operation
\begin{equation*}
  z_0 \circ z = (t_0+t,x_0+x + t v_0, v_0 + v).
\end{equation*}
Finally for any $r>0$ it is invariant under the scaling
$z=(t,x,v) \to rz := (r^2t,r^3x,r v)$. Using the invariances of
the equation, we define for $z_0 \in \R^{1+2d}$ and $r>0$:
\begin{align*}
  Q_r(z_0) := z_0 \circ \left[ r Q_1 \right]  
  := \left\{ -r^2 <t-t_0 \le 0, \ |x - x_0- (t-t_0)v_0|
  < r^3, \ |v-v_0| < r \right\}
\end{align*}
and we simply write $Q_r(0)=Q_r$ when $z_0=0$. We denote $|E|$
the Lebesgue measure of a Lebesgue set $E$. We write
$a \lesssim b$ (resp.  $a \gtrsim b$) when $a \le Cb$ (resp.
$a \ge Cb$) for some constant $C>0$ whose only relevant
dependency, if any, is specified in the index, as in
$\lesssim_{\text{parameter}}$. We write $a\sim b$ if
$a \lesssim b$ and $a \gtrsim b$. We write $\fint$ for integrals
normalized by the volume of the integration domain, and
$\cT:=\partial_t+v\cdot \nabla_x$.

\begin{definition}[Weak solution, sub-solution, super-solution]
  \label{d:weak}
  Let $\cU = (a,b) \times \Omega_x \times \Omega_v$ with
  $-\infty<a < b \le +\infty$ and $\Omega_x$ and $\Omega_v$ two
  open sets of $\R^d$. A function $f: \cU \to \R$ is a \emph{weak
    solution} of \eqref{e:main} on $\cU$ if
  $f \in L^\infty ((a,b);L^2 (\Omega_x \times \Omega_v)) \cap L^2
  ((a,b) \times \Omega_x;H^1(\Omega_v))$ and \eqref{e:main} is
  satisfied in the sense of distributions in $\cU$. A function
  $f$ is a \emph{weak sub-solution} of \eqref{e:main} if
  $f \in L^\infty ((a,b);L^2 (\Omega_x \times \Omega_v)) \cap L^2
  ((a,b) \times \Omega_x;H^1(\Omega_v))$ and for all
  $\beta : \R \to \R$ in $C^2$ with $\beta' \ge 0$ and
  $\beta'' \ge 0$ both bounded, and any non-negative
  $\varphi \in C^\infty_c (\cU)$,
  \begin{equation*} 
    - \int_\cU \beta(f) \cT \varphi \dd z
    \le - \int_{\cU} A \nabla_v \beta(f) \cdot \nabla_v \varphi \dd
    z + \int_{\cU} \left[ B \cdot \nabla_v \beta(f)+S \beta'(f)
    \right] \varphi \dd z.
  \end{equation*}
  It is a \emph{weak super-solution} of \eqref{e:main} if $-f$ is
  a weak sub-solution.
\end{definition}

\begin{remark}
  \label{r:def gen}
  This definition is equivalent to those in \cite{pp} and
  \cite{gimv} in the case of solutions, but is weaker than them
  in the case of sub- and super-solutions. Indeed~\cite{pp,gimv}
  make respectively the extra regularity assumption
  $\cT f \in L^{2}((a,b) \times \Omega_x \times \Omega_v)$ or
  $\cT f \in L^2 ((a,b) \times \Omega_x;H^{-1}(\Omega_v))$.
  These assumptions were introduced to justify the energy
  estimates. It is however enough to assume the renormalization
  formulation above, and it allows to include important
  sub-solutions such as for instance $f=f(t)={\bf 1}_{t \le 0}$
  (when $S=0$) which were excluded by the definition
  in~\cite{pp,gimv}. Our definition is equivalent to that of De
  Giorgi in the elliptic case (and reminiscent of the definition
  of solutions in \cite{GV15}).
\end{remark}

\subsection{Main contributions}
\label{sec:contributions}

Given the invariances, we only state results in unit
centered cylinders.
\begin{figure}[h]
  \includegraphics[width=12cm]{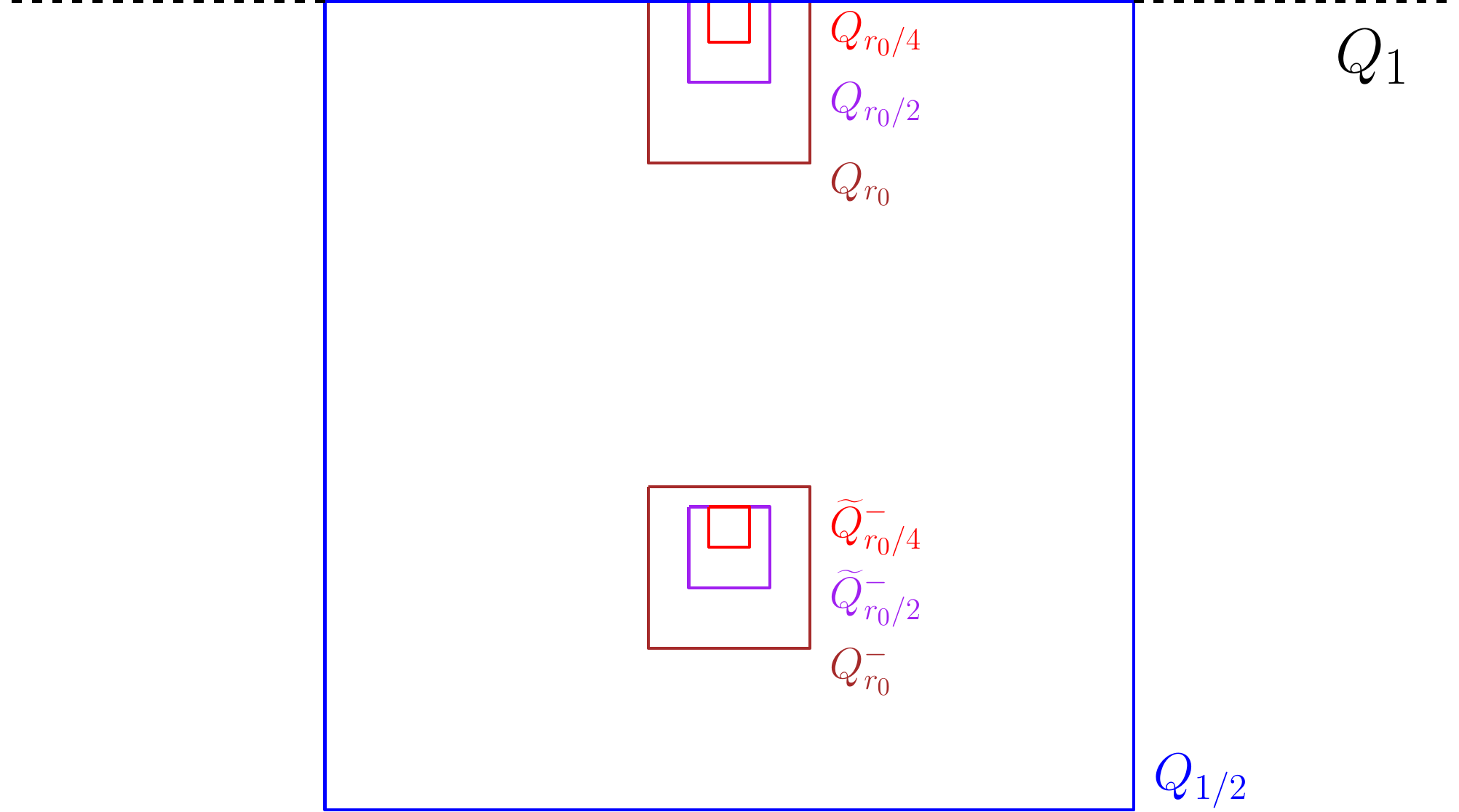}
  \caption{The different cylinders in the Intermediate Value
    Lemma and Harnack inequalities.}
  \label{fig:LVI}
\end{figure}
\begin{theorem}[Intermediate Value Lemma]
  \label{t:IVL}
  Given $\delta_1,\delta_2 \in (0,1)$, there are explicit
  constants
  $r_0 \sim
  \frac{\sqrt{\delta_1}}{\sqrt{1+\|S\|_{L^\infty(Q_1)}}}$ in
  $(0,\frac{1}{20})$ if $S \not =0$ and $r_0=\frac{1}{20}$ if
  $S=0$, and
  $\theta \sim
  \frac{(\delta_1\delta_2)^{10d+15}}{(1+\|S\|_{L^\infty(Q_1)})^{4d+3}}$
  and
  $\nu \gtrsim \frac{\left( \delta_1 \delta_2
    \right)^{10d+16}}{(1+\|S\|_{L^\infty(Q_1)})^{2d+1}}$ both in
  $(0,1)$, such that any sub-solution
  $f:Q_1\rightarrow \mathbb{R}$ to
  \eqref{e:main}-\eqref{e:hyp-coef} so that $f\leq 1$ in
  $Q_{\frac{1}{2}}$ and
  \begin{equation}
    \label{e: hyp IVL}
    |\{f\leq 0\}\cap Q_{r_0}^{-}|   \geq  \delta_1
    |Q_{r_0}^{-}|
    \quad \text{ and } \quad 
    |\{f\geq 1-\theta\} \cap Q_{r_0}|   \geq  \delta_2
    |Q_{r_0}|
  \end{equation}
  i.e. we control the measure of where $f$ is below $0$ and above
  $(1-\theta)$, satisfies
  \begin{equation}
    \label{e: concl IVL}
    \left|\left\{ 0<f< 1-\theta \right\}\cap Q_{\frac12} \right|
    \geq \nu |Q_{\frac12}|,
  \end{equation}
  where
  $Q_{r_0}^{-} := Q_{r_0}(-2r_0^2,0,0) =(-3r_0^2,-2r_0^2]
  \times B_{r_0^3} \times B_{r_0}$ (see Figure~\ref{fig:LVI}).
\end{theorem}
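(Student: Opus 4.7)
\medskip

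\noindent\textbf{Proof proposal.}
The plan is to argue by contradiction: assume that $|\{0<f<1-\theta\}\cap Q_{1/2}|<\nu|Q_{1/2}|$ and derive a quantitative contradiction by combining a Caccioppoli-type energy estimate on a truncation of $f$ with a slice-wise Poincar\'e/isoperimetric argument in $v$, the two being bridged across time by the kinetic transport operator $\cT$. First I would introduce the ``middle'' truncation $g:=(f\wedge(1-\theta))_+$, so that $\{0<g<1-\theta\}=\{0<f<1-\theta\}\cap\{f<1\}$ and $\nabla_v g$ is supported on that intermediate set. Approximating $r\mapsto r_+\wedge(1-\theta)$ by smooth convex $\beta$ and applying the sub-solution inequality of Definition~\ref{d:weak} with test function $\varphi^2$ for a smooth cutoff $\varphi$ supported in a slightly enlarged cylinder and equal to $1$ on $Q_{1/2}$, the usual manipulations (Cauchy--Schwarz, the ellipticity $A\ge \lambda I$, and the $L^\infty$ bounds on $B$ and $S$) should yield a Caccioppoli estimate
\begin{equation*}
  \int_{Q_{1/2}} |\nabla_v g|^2 \dd z \;\lesssim\; 1+\|S\|_{L^\infty(Q_1)}
\end{equation*}
together with a control of $\cT g$ in a suitable negative-order space, obtained by integrating the sub-solution inequality against test functions.

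Next I would carry out a slice-wise Poincar\'e/De Giorgi isoperimetric step. On any fixed $(t,x)$-slice on which the $v$-measures of both $\{g=0\}$ and $\{g=1-\theta\}$ are comparable to the size of the $v$-ball, a standard isoperimetric inequality forces either the intermediate slice-measure $|\{0<g<1-\theta\}|$ to be comparable, or the $L^2$-slice-norm of $\nabla_v g$ to be bounded below by a power of $(1-\theta)\min(\delta_1,\delta_2)$. Integrating over the $(t,x)$-slices of $Q_{1/2}$ that carry both extremes and comparing against the global energy bound from the first step would then yield the desired lower bound $|\{0<g<1-\theta\}\cap Q_{1/2}|\gtrsim \nu|Q_{1/2}|$, provided enough such ``bivalued'' slices are present.

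The main obstacle is precisely to produce such ``bivalued'' slices: the hypotheses give mass of $\{f\le 0\}$ in the \emph{past} cylinder $Q_{r_0}^-$ and mass of $\{f\ge 1-\theta\}$ in the \emph{future} cylinder $Q_{r_0}$, so that the two extreme sets live on disjoint time intervals. To transport the past information forward I would use the kinetic characteristic $(t,x,v)\mapsto(t,x+(t+2r_0^2)v,v)$, noting that along these characteristics the evolution of $g$ is controlled by $\cT g$, hence by $\nabla_v g$ via the equation and a duality argument. Since the $v$-oscillation of such a characteristic over a time interval of length $3r_0^2$ is of order $r_0^3$, choosing $r_0\sim\sqrt{\delta_1}/\sqrt{1+\|S\|_{L^\infty}}$ keeps the transported past set inside $Q_{1/2}$ and preserves a positive fraction of its measure on each forward slice, up to an error that must be absorbed by taking $\theta$ a sufficiently large power of $\delta_1\delta_2$. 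The hard part will be to track all the powers of $\delta_1\delta_2$ accurately through the transport-then-isoperimetry chain so as to recover the stated exponents $(10d+15)$ and $(10d+16)$; the dependence on $\|S\|_{L^\infty(Q_1)}$ will come out of the Caccioppoli right-hand side essentially for free once this quantitative bookkeeping is done.
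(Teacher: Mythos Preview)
Your proposal has the right instincts (Caccioppoli, splitting $\nabla_v f$ according to the three level sets, bridging the time gap via the transport field) but there is a genuine gap at the step you yourself flag as ``the main obstacle''. Transporting the set $\{f\le 0\}\cap Q_{r_0}^-$ forward along free characteristics and claiming that the evolution of $g$ is controlled by $\nabla_v g$ ``via the equation and a duality argument'' does not work as stated: along a characteristic, $\cT f=\nabla_v\cdot(A\nabla_v f)+B\cdot\nabla_v f+S-\bar m$ involves a second-order object in $v$ (plus a defect measure), and its control is only in $L^2_{t,x}H^{-1}_v+M^1$. That norm cannot be tested against a single characteristic or against a thin tube of characteristics of $v$-thickness $O(1)$ without losing a full $v$-derivative; there is no duality pairing that converts $\nabla_v\cdot(A\nabla_v f)$ into $\nabla_v f$ while preserving the slice-wise localisation you need for the isoperimetric step. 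In short, the scheme ``transport forward, then apply the $v$-isoperimetric inequality on each $(t,x)$-slice'' breaks because the transport error is not controllable by the available energy. (A secondary issue: $r\mapsto(r\wedge(1-\theta))_+$ is concave, not convex, so Definition~\ref{d:weak} does not apply to it directly; one must work with $f_+$ and $(f-(1-\theta))_+$ separately.)

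The paper circumvents this by replacing the slice-wise argument with an integrated $L^1$ Poincar\'e inequality comparing $f$ on $Q_{r_0}$ to its \emph{average} on $Q_{r_0}^-$ (Proposition~\ref{p:HPI lemma}). The key device is: (i) for each $(t,x,v)\in Q_{r_0}$ and each $(s,y,w)\in Q_{r_0}^-$, connect the two points by a four-leg trajectory $\nabla_x$--$\nabla_v$--$\cT$--$\nabla_v$; (ii) average over $(s,y,w)$ against a cutoff $\varphi_\varepsilon(y,w)$, which makes the $\cT$-leg amenable to an integration by parts in the intermediate velocity variable and turns $\nabla_v\cdot(A\nabla_v f)$ into $\nabla_v f$ at the price of a factor $\varepsilon^{-(d+2)}$; (iii) the short $\nabla_x$-leg of length $\varepsilon$ is handled by the fractional regularity $f\in L^1_{t,v}W^{\sigma,1}_x$, $\sigma<\tfrac13$, obtained from the Kolmogorov fundamental solution (Proposition~\ref{prop:Gain sol}), and contributes the error $\varepsilon^\sigma\|f\|_{L^2}$. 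Once this weak Poincar\'e inequality is in hand, the proof of Theorem~\ref{t:IVL} is exactly the level-set splitting of $\int|\nabla_v f_+|$ that you outlined, with $\varepsilon$ and $\theta$ chosen to balance terms; the exponents $10d+15$ and $10d+16$ come from $\sigma\sim\tfrac13$ in $\varepsilon^{-(d+2)/\sigma}$ and from $r_0^{4d+1}$. The missing ingredient in your plan is precisely this $W^{\sigma,1}_x$ regularity and the averaging/noising trick that allows the integration by parts across the $\cT$-leg.
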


\begin{remark}
  \label{rem:gap}
  This lemma is the kinetic quantitative counterpart of the
  quantitative elliptic \cite{DG56,DG57,vasseur} and parabolic
  \cite{gueDGhalv2} intermediate value lemma.  As in the
  parabolic case, past and a future cylinders $Q_{r_0}^{-}$ and
  $Q_{r_0}^{+}$ are required to be disjoint but contrary to the
  parabolic case, a gap in time between the two cylinders is also
  required. This gap is also mentioned in~\cite{gimv,AP20}. Let
  us explain why it cannot be removed. Consider for instance
  $S=0$ and velocities bounded by $|v| \le V_m$ in the
  cylinder. Then ${\bf 1}_{x+ct<a}$ is a sub-solution for any
  $a\in \mathbb{R}$ and $|c|>V_m$. If $Q_{r_0}^{-}$ and $Q_{r_0}$
  were too close, a line of discontinuity of the form $x+ct=a$
  could cross both and the previous sub-solution would contradict
  the conclusion of Theorem~\ref{t:IVL}.
\end{remark}

\begin{theorem}[Harnack inequalities]
  \label{t:harnack}
  There is $\zeta >0$ depending only $\lambda,\Lambda$ such that
  any non-negative weak super-solution $f$
  to~\eqref{e:main}-\eqref{e:hyp-coef} in $Q_1$ satisfies the
  weak Harnack inequality
  \begin{equation}
    \label{eq:w-Harnack-stat}
    \left( \int_{\tilde Q_{\frac{r_0}{2}} ^-} f^\zeta (z) \dd t \dd x \dd v 
    \right)^{\frac{1}{\zeta}}
    \lesssim_{\lambda,\Lambda} \inf_{ Q_{\frac{r_0}{2}}}f 
    + \|S\|_{L^\infty (Q_1)}
  \end{equation} 
  where $r_0=\frac{1}{20}$ and
  $\tilde Q_{\frac{r_0}{2}}
  ^{-}:=Q_{\frac{r_0}{2}}((-\frac{19}{8}r_0^2,0,0))$ (see
  Figure~\ref{fig:LVI}), and any non-negative weak solution $f$
  to~\eqref{e:main}-\eqref{e:hyp-coef} satisfies the following
  Harnack inequality (with
  $\tilde
  Q_{\frac{r_0}{4}}^{-}:=Q_{\frac{r_0}{4}}((-\frac{19}{8}r_0^2,0,0))$)
  \begin{equation}
    \label{eq:s-Harnack-stat}
    \sup_{\tilde Q_{\frac{r_0}{4}} ^{-}} f \lesssim_{\lambda,\Lambda}
    \inf_{Q_{\frac{r_0}{4}}} f + \| S \|_{L^\infty(Q_1)}.
  \end{equation}
\end{theorem}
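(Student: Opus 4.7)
I would establish Theorem~\ref{t:harnack} via the classical De Giorgi--Nash--Moser scheme, in two pieces: (i) a local boundedness ($L^\infty$) estimate for non-negative sub-solutions and (ii) a weak Harnack inequality for non-negative super-solutions. Since any solution is simultaneously a sub- and a super-solution, the strong Harnack \eqref{eq:s-Harnack-stat} follows by chaining these two estimates. The Intermediate Value Lemma (Theorem~\ref{t:IVL}) is the main ingredient for (ii); the $L^\infty$ bound (i) is proved independently by energy methods.

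For the local boundedness, the aim is to show that for any non-negative sub-solution $f$ in $Q_1$, any $p\ge 1$, and any $Q_r(z_0)\subset Q_1$,
\[
\sup_{Q_{r/2}(z_0)} f \,\lesssim_{p,\lambda,\Lambda}\, \left( \fint_{Q_r(z_0)} f^p \dd z \right)^{1/p} + \|S\|_{L^\infty(Q_1)}.
\]
I would run a De Giorgi iteration: test the renormalized sub-solution inequality with $\beta_n$ approximating $s\mapsto (s-k_n)_+$ and a cutoff $\varphi_n$ on a geometric sequence of nested cylinders, with levels $k_n \nearrow K$ to be tuned. The resulting energy estimate gives $L^2$ control of $\nabla_v (f-k_n)_+$, which combined with Sobolev embedding in $v$ and a gain of integrability in $(t,x)$ coming from hypoelliptic averaging applied to the transport equation satisfied by $(f-k_n)_+^2$ produces a super-linear recurrence $A_{n+1}\le C^n A_n^{1+\alpha}$ for $A_n := \int (f-k_n)_+^2$. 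Convergence of this recurrence when $K$ is large enough relative to $\|f\|_{L^2}$ yields the $L^2$-to-$L^\infty$ bound; interpolation with a crude $L^\infty$ bound promotes the exponent $2$ to any $p\ge 1$.

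For the weak Harnack I would apply Theorem~\ref{t:IVL} iteratively. Set $M := \inf_{Q_{r_0/2}} f + \|S\|_{L^\infty(Q_1)}$ and consider the family of sub-solutions $g_k := (1 - f/(2^{k}M))_+$, each bounded by $1$. By definition of $M$, the set $\{g_k \ge 1-\theta\}$ has positive measure in the future cylinder for every $k$, so the contrapositive of Theorem~\ref{t:IVL} says: if $|\{f \le 2^{k} M\}\cap Q_{r_0}^-|$ exceeds $\delta_1|Q_{r_0}^-|$, then an intermediate layer $\{\theta\cdot 2^k M < f < 2^k M\}$ of measure $\nu|Q_{1/2}|$ must appear. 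Tiling $\tilde Q_{r_0/2}^-$ by translates of the IVL geometry and summing over $k$ produces the geometric decay
\[
\bigl|\{f > 2^k M\}\cap \tilde Q_{r_0/2}^-\bigr| \le (1-\nu)^k \bigl|\tilde Q_{r_0/2}^-\bigr|,
\]
which by the layer-cake formula yields the $L^\zeta$ bound with $\zeta \sim |\log(1-\nu)|$. Combining (i) applied to the solution $f$ on $\tilde Q_{r_0/4}^-\subset \tilde Q_{r_0/2}^-$ with exponent $\zeta$, and (ii) just established, gives the strong Harnack \eqref{eq:s-Harnack-stat}.

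The main obstacle is the iteration in step (ii): upgrading the purely measure-theoretic, one-step statement of Theorem~\ref{t:IVL} into a quantitative $L^\zeta$ estimate demands a covering/stacking argument that simultaneously respects the parabolic-type scaling $(r^2,r^3,r)$ and the non-commutative Galilean shear governing $Q_r(z_0)$. Moreover, the mandatory time gap between past and future cylinders in Theorem~\ref{t:IVL} (cf. Remark~\ref{rem:gap}) precludes the naive iteration available in the elliptic case and forces a more delicate chain of cylinders, along which one must track the quantitative dependence of $\theta$ and $\nu$ on $\delta_1,\delta_2$ to close the recurrence.
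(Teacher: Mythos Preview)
Your plan for part~(i), the $L^\infty$ bound for sub-solutions, matches the paper's Proposition~\ref{prop:1st lemma} and is fine. The gap is in part~(ii), and it is exactly at the place you flag as ``the main obstacle''.

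Your sketch asserts that iterating Theorem~\ref{t:IVL} on the sub-solutions $g_k$ and ``tiling $\tilde Q_{r_0/2}^-$ by translates'' yields the geometric decay $|\{f>2^kM\}\cap\tilde Q^-_{r_0/2}|\le(1-\nu)^k|\tilde Q^-_{r_0/2}|$. This step does not work as written. First, a bookkeeping point: with $g_k=(1-f/(2^kM))_+$ one has $\{g_k\le 0\}=\{f\ge 2^kM\}$, so the first IVL hypothesis concerns $|\{f\ge 2^kM\}\cap Q_{r_0}^-|$, not $|\{f\le 2^kM\}\cap Q_{r_0}^-|$ as you wrote; and the set $\{g_k\ge 1-\theta\}=\{f\le\theta\,2^kM\}$ need not have measure $\ge\delta_2|Q_{r_0}|$ in the future cylinder for small $k$ merely from the definition of $M$. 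More fundamentally, even after straightening out the roles of past and future, each application of the IVL produces an intermediate layer of \emph{fixed volume} $\nu|Q_{1/2}|$ in the big cylinder, not a fixed \emph{fraction} of the current upper level set in $\tilde Q_{r_0/2}^-$. Since the layers $\{0<g_k<1-\theta\}$ are disjoint, you can iterate at most $\sim 1/\nu$ times before exhausting $Q_{1/2}$; this is precisely how the paper obtains the measure-to-pointwise Lemma~\ref{l:increase}, but it does \emph{not} give geometric decay of the level sets, hence no $L^\zeta$ bound. Tiling by translates at a single scale does not help, because $\nu\sim(\delta_1\delta_2)^{10d+16}$ degenerates as the measure hypothesis $\delta_1$ shrinks along the iteration.

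What the paper actually does for~(ii) is a two-stage argument. First, the IVL is iterated \emph{once and for all} into the measure-to-pointwise Lemma~\ref{l:increase}; its contrapositive, applied to $g=1-h/M$ at every scale, gives~\eqref{eq:scaling-invariant} and hence a decay $|\{h\ge M\}\cap Q_{r_0}^-|\lesssim (\ln(1+M))^{-1/(10d+17)}$, which is only \emph{logarithmic} in $M$ (equation~\eqref{eq:initial}). Second, the upgrade from logarithmic to power-law decay is achieved by an induction over level sets $\{h\ge M^k\}$ on a shrinking sequence of cylinders $\cQ^k$, using at each step a Vitali covering by cylinders $\mathfrak C_r[z]$ at \emph{all scales} $r\in(0,\alpha_{k+1}/15)$ together with the scale-invariant implication~\eqref{eq:scaling-invariant}. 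The multi-scale covering is essential: it is what converts the one-shot pointwise gain of Lemma~\ref{l:increase} into a contraction on the measure of level sets. Your proposal is missing both the passage through Lemma~\ref{l:increase} and this multi-scale Vitali step; without them the recurrence you write down cannot be closed.
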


\begin{remarks}
\begin{enumerate}
\item The ``weak'' Harnack inequality, in spite of its name, is
  not weaker than Harnack inequality since it holds for
  super-solutions. Combined with the $L^{\zeta} \to L^\infty$
  gain of integrability in~Proposition~\ref{prop:1st lemma}, it
  implies the Harnack inequality for solutions. Super-solutions
  of the form ${\bf 1}_{x+ct\ge a}$ for $a\in \mathbb{R}$ and
  $|c|>V_m$ (included in our definition) show that the gap in
  time is required in~\eqref{eq:w-Harnack-stat}.
\item The Harnack inequality for equation~\eqref{e:main} was
  first proved in~\cite{gimv} by a non-constructive argument. The
  present paper provides a new constructive De Giorgi
  approach. Another constructive proof by the Moser-Kru\v{z}kov
  approach is proposed in~\cite{Guerand-Imbert}. The weak Harnack
  inequality was obtained for the long-range Boltzmann equation
  in~\cite{MR4049224}, and was proved for the kinetic
  Fokker-Planck equations considered in this paper
  in~\cite{Guerand-Imbert} by the Moser-Kru\v{z}kov approach.
\item As compared to that in~\cite{Guerand-Imbert}, our approach
  is based on trajectorial arguments and does not require working
  on the logarithm of the solution or the so-called inkspot
  lemma. Our Poincar\'e inequality (Proposition~\ref{p:HPI
    lemma}) and measure-to-pointwise estimate (Lemma
  \ref{l:increase}) take into account a gap in time which removes
  the requirement for the sub-solution to be considered in a
  large domain. Our Poincar\'e inequality also holds without an
  information in measure around the center of the cylinder as
  in~\cite{Guerand-Imbert}.
\end{enumerate}
\end{remarks}

\begin{theorem}[H\"{o}lder Continuity]
  \label{t:holder}
  There is $\alpha \in (0,1)$, computable from the proof and only
  depending on $\lambda, \Lambda$ and $\|S\|_{L^{\infty}}$, such
  that any weak solution $f$ of \eqref{e:main}-\eqref{e:hyp-coef}
  in $Q_2$ satisfies
  \begin{equation*}
    [f]_{C^\alpha (Q_{1})} := \sup_{z_1,z_2 \in Q_1, \ z_1 \not =
      z_2} \frac{|f(z_1) - f(z_2)|}{|z_1-z_2|^{\alpha}}
    \lesssim_{\lambda,\Lambda}
    \left(1+\|S\|_{L^\infty(Q_2)}\right)
    \left( \|f\|_{L^2 (Q_2)} + \|S\|_{L^\infty(Q_2)} \right).
  \end{equation*}
\end{theorem}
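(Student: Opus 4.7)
The plan is to deploy the classical De Giorgi--Nash--Moser oscillation-decay scheme: localize the $L^\infty$ norm of $f$, extract a one-step decay of oscillation from the weak Harnack inequality of Theorem~\ref{t:harnack}, iterate at all scales using the Galilean-scaling invariance of~\eqref{e:main}, and translate the geometric decay into a pointwise Hölder seminorm bound. The first ingredient is a local $L^\infty$ bound: applying the $L^\zeta \to L^\infty$ gain of integrability (the first De Giorgi lemma, Proposition~\ref{prop:1st lemma} referenced in the remarks after Theorem~\ref{t:harnack}) to both $f$ and $-f$ yields
\begin{equation*}
  \|f\|_{L^\infty(Q_{3/2})} \lesssim_{\lambda,\Lambda}
  \|f\|_{L^2(Q_2)} + \|S\|_{L^\infty(Q_2)} =: M_0,
\end{equation*}
reducing the problem to bounding the Hölder seminorm in terms of $M_0$ and $\|S\|_{L^\infty}$.

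For the one-step decay, fix $z_0$ and a scale $r$ small enough that, after rescaling, the reference cylinder $Q_1(z_0)$ embeds in $Q_{3/2}$; set $m = \inf_{Q_1(z_0)} f$, $M = \sup_{Q_1(z_0)} f$ and $\omega = M - m$. The two functions $g_+ := f - m$ and $g_- := M - f$ are nonnegative weak super-solutions (with right-hand sides $\pm S$) satisfying $g_+ + g_- \equiv \omega$, so on the past cylinder $\tilde Q^-_{r_0/2}(z_0)$ appearing in Theorem~\ref{t:harnack} at least one of them is $\ge \omega/2$ on a subset of measure at least $\tfrac{1}{2}|\tilde Q^-_{r_0/2}|$. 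Assuming it is $g_+$ (otherwise swap roles),
\begin{equation*}
  \omega \;\lesssim_{\lambda,\Lambda}\; \left( \fint_{\tilde
      Q^-_{r_0/2}(z_0)} g_+^\zeta \dd z \right)^{1/\zeta}
  \;\lesssim_{\lambda,\Lambda}\; \inf_{Q_{r_0/2}(z_0)} g_+ +
  \|S\|_{L^\infty}
\end{equation*}
by~\eqref{eq:w-Harnack-stat}, which rearranges into the one-step oscillation decay
\begin{equation*}
  \operatorname{osc}_{Q_{r_0/2}(z_0)} f \;\le\; (1-\eta)\,\omega + C\,\|S\|_{L^\infty}
\end{equation*}
for some explicit $\eta \in (0,1)$ and $C > 0$ depending only on $\lambda, \Lambda$.

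Iterating this decay via the scaling $z \mapsto \rho z$ with $\rho = r_0/2$ (which turns the source into $\rho^{2k} S$ at step $k$) yields $\omega_{k+1} \le (1-\eta)\omega_k + C\rho^{2k}\|S\|_{L^\infty}$ for $\omega_k := \operatorname{osc}_{Q_{\rho^k}(z_0)} f$. A standard absorption trick using the auxiliary quantity $\tilde\omega_k := \omega_k + K\rho^{2k}\|S\|_{L^\infty}$ (with $K$ large enough) gives a pure geometric decay $\tilde\omega_{k+1} \le (1-\eta)\tilde\omega_k$, hence $\omega_k \lesssim \rho^{k\alpha}(M_0 + \|S\|_{L^\infty})$ with $\alpha := \log(1/(1-\eta))/\log(1/\rho) \in (0,1)$. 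This converts into $\operatorname{osc}_{Q_r(z_0)} f \lesssim r^\alpha (M_0 + \|S\|_{L^\infty})$ for every $z_0 \in Q_1$ and $r \in (0,1)$, and then into the pointwise estimate by means of the Galilean quasi-distance $d(z_1,z_2) := \inf\{r : z_2 \in Q_r(z_1)\}$, which is equivalent to the Euclidean distance on the bounded set $Q_1$ and gives $|f(z_1)-f(z_2)| \le \operatorname{osc}_{Q_{d(z_1,z_2)}(z_1)} f$. The prefactor $(1+\|S\|_{L^\infty})$ in the conclusion and the dependence of $\alpha$ on $\|S\|_{L^\infty}$ come from careful bookkeeping of the iteration constants (in particular from the IVL parameters $\theta,\nu$ of Theorem~\ref{t:IVL}, which themselves depend on $\|S\|_{L^\infty}$ and feed into the weak Harnack constants).

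The main obstacle is the gap-in-time geometry forced by Theorem~\ref{t:harnack}: the past cylinder $\tilde Q^-_{r_0/2}$ where the $L^\zeta$ information is collected is disjoint from and shifted backward in time relative to the forward cylinder $Q_{r_0/2}$ where the infimum is controlled, and both must remain nested inside the reference $Q_1(z_0)$ at every iteration step. Verifying that the measure dichotomy for $g_+$ versus $g_-$ applies on the shifted past cylinder, that the successive rescaled cylinders sit as required, and that the constants (including their $\|S\|_{L^\infty}$-dependence) propagate correctly through the geometric iteration, are the main technical points; once these are arranged, the extraction of $\alpha$ and of the pointwise Hölder estimate are routine.
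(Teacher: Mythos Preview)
Your argument is correct and follows a route the paper explicitly mentions as an alternative (``H\"{o}lder regularity could also be deduced from the Harnack inequality in Theorem~\ref{t:harnack}''), but it is not the route the paper actually takes. The paper's own proof in Subsection~\ref{ss:Holder} bypasses the weak Harnack inequality and applies the measure-to-pointwise Lemma~\ref{l:increase} directly to the normalized function
\[
  F := \frac{2}{\max\big(\operatorname{osc}_{Q_1} f,\, e^{2(1+2^{10d+16})}\|S\|_{L^\infty}\big)}
  \left[ f - \tfrac12\big(\sup_{Q_1} f + \inf_{Q_1} f\big) \right],
\]
using that one of $F$ or $-F$ automatically satisfies the measure hypothesis~\eqref{eq:measure-hyp-pointwise} with $\delta=\tfrac12$. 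This yields the one-step decay~\eqref{eq:reduc-osc} without invoking the $L^\zeta$ integrability or the Vitali covering argument behind Theorem~\ref{t:harnack}; the iteration is then the same scaling scheme you describe. So your route is logically downstream of the paper's: the weak Harnack inequality is itself derived from Lemma~\ref{l:increase} plus the induction of Section~\ref{sec:Harnack}, making your argument slightly less direct but perfectly valid.

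One small correction: you attribute the $\|S\|_{L^\infty}$-dependence of $\alpha$ and of the prefactor to the IVL parameters feeding into the weak Harnack constants. In fact, Theorem~\ref{t:harnack} is stated with $\zeta$ and the implicit constant depending only on $\lambda,\Lambda$ (the source is absorbed by the shift $h=f+(1+t)\|S\|_{L^\infty}$ in the proof), so in your approach $\eta$ depends only on $\lambda,\Lambda$ and the $\|S\|_{L^\infty}$ contribution enters purely through the additive term $C\rho^{2k}\|S\|_{L^\infty}$ in the iteration; your scheme thus actually gives $\alpha$ independent of $\|S\|_{L^\infty}$, which is consistent with (and in fact slightly sharper than) the theorem's stated dependency.
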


\begin{remark} 
  The H\"{o}lder continuity was first proved in \cite{wz09,wz11}
  (with constructive method) and this proof is revisited and
  simplified in~\cite{Guerand-Imbert}, including ideas and
  methods from~\cite{moser,kru63,kru64}. An alternative
  non-constructive proof was proposed in~\cite{gimv} following
  the De Giorgi method \cite{DG56}: the non-constructive part was
  the intermediate-value lemma and we provide here a new
  constructive argument.
\end{remark}

\subsection{Structure of the method}

The core of our proof is, given $f$ sub-solution
to~\eqref{e:main}-\eqref{e:hyp-coef} with $S=0$:
\begin{align*}
  f \in L^\zeta, \ \zeta >0
  & \quad \xrightarrow{(1)}
    \quad f \in  L^\infty \cap L^1_{t,v}W^{\frac13-0,1}_x 
    \quad  \xrightarrow{(2)} \quad
    \text{Weak Poincar\'e inequality in $L^1$} \\
  & \quad \xrightarrow{(3)}
    \quad \text{Intermediate Value Lemma (Theorem~\ref{t:IVL}})
    \quad \xrightarrow{(4)}
    \quad \text{Measure-to-pointwise estimate} \\
  & \quad \xrightarrow{(5)} \quad \text{Weak log-Harnack
    estimate} \quad \xrightarrow{(6)}
    \quad \text{Weak Harnack estimate.}
\end{align*}
Once these steps are proved, it is immediate to prove the Harnack
inequality for solutions by combining the weak Harnack inequality
for super-solutions and step~(1) for sub-solutions. The H\"{o}lder
continuity follows classically (see Subsection~\ref{ss:Holder})
from either the measure-to-pointwise estimate applied to both
sub-solutions $f$ and $-f$, or from the Harnack inequality.
Step~(1) (Section~\ref{sec:int}) is semi-novel: it elaborates
upon ideas in~\cite{pp} to prove the first Lemma of De Giorgi as
well as a gain of Sobolev regularity with the help of Kolmogorov
fundamental solutions. Step~(2) (Proposition~\ref{p:HPI lemma})
is the most novel step and introduces an argument based on
trajectories and the previous Sobolev regularity to ``noise'' the
$x$-dependency of the trajectories. Step~(3) (proof in
Subsection~\ref{ss:proof-ivl}) is novel and based on simple
energy estimates. Step~(4) (Lemma~\ref{l:increase} in
Subsection~\ref{ss:m2p}) is standard and sketched for the sake of
obtaining quantitative constants. Step~(5) (in
Section~\ref{sec:Harnack}) is semi-novel but immediate when
constants are quantified properly in the previous steps. Step~(6)
(in Section~\ref{sec:Harnack}) is novel in the context of
hypoelliptic equations but inspired from elliptic
equations~\cite{MR3565366}; it uses an induction, Vitali's
covering lemma and Step~(5) at every scale.

\section{Integral estimates revisited}
\label{sec:int}

In this section, we briefly revisit estimates from~\cite{pp,gimv}
on the gain of integrability for sub-solutions (the kinetic
counterpart to the first lemma of De Giorgi) and the low-order
Sobolev regularity estimate for sub-solutions, first mentioned
in~\cite{gimv}. We provide new proofs based on fundamental
solutions which, albeit variants of existing ones, seem simpler
and optimal.

\subsection{The energy estimate}

\begin{proposition}[Energy estimate]
  \label{prop:EE}
  Let $f$ be a non-negative weak sub-solution
  to~\eqref{e:main}-\eqref{e:hyp-coef} in an open set
  $\cU \in \R^{1+2d}$. Given any
  $Q_r(z_0) \subset Q_R(z_0) \subset \cU$ with $0<r<R$, one has
  \begin{align*}
    \int_{Q_r(z_0)} |\nabla_v f|^2
    \lesssim_{\lambda,\Lambda} \cC(r,R,v_0) \left( 
    \int_{Q_R(z_0)} f^2 + \int_{Q_R(z_0)}  f |S| \right)
  \end{align*}
  where $z_0=(t_0,x_0,v_0)$,
  $Q_r^\tau (z_0) = \{(x,v)\in\R^{2d} \, : \, (\tau,x,v)\in
  Q_r(z_0) \} $, and
  \begin{equation}
    \label{eq:C}
    \cC(r,R,v_0) := \left( 1 + \frac{1}{(R-r)^2} +
    \frac{|v_0|+R}{(R-r)r^2} +\frac{1}{(R-r)r} \right).
  \end{equation}
\end{proposition}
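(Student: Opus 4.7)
The proof is a Caccioppoli-type energy estimate obtained by testing the weak sub-solution inequality against $\varphi = f\chi^2$ for a suitable cutoff. The first step is to build a smooth $\chi$ adapted to the kinetic geometry as a product in the Galilean-translated variables:
\begin{equation*}
  \chi(t,x,v) = \eta_1(t-t_0)\,\eta_2\bigl(x-x_0-(t-t_0)v_0\bigr)\,\eta_3(v-v_0),
\end{equation*}
where $\eta_1,\eta_2,\eta_3$ are smooth bumps with $\eta_1\equiv 1$ on $[-r^2,0]$ and $\operatorname{supp}\eta_1\subset(-R^2,0]$, and analogous properties in the $\tilde x$, $\tilde v$ variables at scales $r^3,R^3$ and $r,R$, so that $\chi\equiv 1$ on $Q_r(z_0)$, $\operatorname{supp}\chi\subset Q_R(z_0)$, and $|\eta_1'|\lesssim (R^2-r^2)^{-1}$, $|\nabla\eta_2|\lesssim (R^3-r^3)^{-1}$, $|\eta_3'|\lesssim (R-r)^{-1}$.

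Second, I apply the weak sub-solution inequality with $\beta(s)=s$ (admissible since $\beta'=1,\beta''=0$ are bounded, non-negative) and $\varphi=f\chi^2\ge 0$. Although $\varphi$ is not literally in $C^\infty_c(\cU)$, a standard density argument — mollification of $f$ in $(x,v)$, or Steklov averaging in $t$ — legitimises it. Writing $\cT(f\chi^2)=\chi^2\cT f+2f\chi\cT\chi$ and integrating by parts in $(t,x)$ via $\int f\chi^2\cT f = \tfrac12\int\chi^2\cT(f^2)$, and using $\nabla_x\cdot v=0$, the eventual boundary contribution at the top face $t=t_0$, namely $\tfrac12\int\chi^2(t_0)f^2(t_0)\dd x\dd v$, ends up on the LHS with \emph{favorable positive} sign and is simply dropped. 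Combining this with the coercivity $\int\chi^2 A\nabla_v f\cdot\nabla_v f\ge\lambda\int\chi^2|\nabla_v f|^2$ and absorbing the two cross-terms $-2\int f\chi(A\nabla_v f)\cdot\nabla_v\chi$ and $\int f\chi^2 B\cdot\nabla_v f$ via Young's inequality yields
\begin{equation*}
  \frac{\lambda}{2}\int\chi^2|\nabla_v f|^2 \le \int f^2|\chi\cT\chi| + C_{\lambda,\Lambda}\int f^2\bigl(|\nabla_v\chi|^2+\chi^2\bigr) + \int|S|f\chi^2.
\end{equation*}

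Third, I insert the cutoff bounds. Clearly $|\nabla_v\chi|^2\lesssim(R-r)^{-2}$, yielding the $\frac{1}{(R-r)^2}$ contribution. For $\cT\chi$, direct differentiation gives $\partial_t\chi=\eta_1'\eta_2\eta_3-v_0\cdot\nabla\eta_2\,\eta_1\eta_3$ and $\nabla_x\chi=\eta_1\nabla\eta_2\,\eta_3$, so the (non-sharp) triangle inequality $|\cT\chi|\le|\eta_1'|+(|v_0|+|v|)|\nabla\eta_2|$ combined with $|v|\le|v_0|+R$ on $\operatorname{supp}\chi$ and the elementary $R^2-r^2\ge r(R-r)$, $R^3-r^3\ge r^2(R-r)$ gives
\begin{equation*}
  |\cT\chi|\lesssim\frac{1}{r(R-r)}+\frac{|v_0|+R}{r^2(R-r)}.
\end{equation*}
Together with $|\chi|\le 1$ and the fact that $\chi\equiv 1$ on $Q_r(z_0)$ implies $\int_{Q_r(z_0)}|\nabla_v f|^2\le\int\chi^2|\nabla_v f|^2$, these contributions plus the trivial $\chi^2\le 1$ term assemble exactly into the constant $\cC(r,R,v_0)$ of~\eqref{eq:C}, finishing the proof.

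The main obstacle is really the bookkeeping at the upper face $t=t_0$ of $Q_R(z_0)$: the cutoff $\chi$ cannot simultaneously be in $C^\infty_c(\cU)$, equal $1$ on all of $Q_r(z_0)$, and be supported inside $Q_R(z_0)$. The standard resolution is to test instead with $\psi(t)f\chi^2$ for a smooth time-cutoff $\psi$ vanishing before $t_1<t_0$, then take the supremum over $t_1\in(t_0-r^2,t_0)$ — this produces the combined parabolic estimate $\sup_{t_1}\int f^2(t_1)\chi^2(t_1)\dd x\dd v+\int\chi^2|\nabla_v f|^2\lesssim\cC\int_{Q_R}f^2+\int f|S|$, of which we keep only the dissipation half — or equivalently to use Steklov averaging in time. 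The $|v_0|$-dependence in $\cC$ is not sharp; it could be eliminated by first Galilean-translating to $v_0=0$, but arises naturally here from the triangle inequality on $\cT\chi$ that drops the cancellation between $-v_0\cdot\nabla_x\chi$ inside $\partial_t\chi$ and $v\cdot\nabla_x\chi$.
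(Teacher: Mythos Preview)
Your proof is correct and follows essentially the same Caccioppoli-type approach as the paper: test the sub-solution inequality against $f\chi^2$ for a cutoff adapted to the kinetic cylinders, absorb the cross-terms by Young/Cauchy--Schwarz, and read off the derivative bounds $|\partial_t\chi|\lesssim\frac{1}{(R-r)r}$, $|\nabla_x\chi|\lesssim\frac{1}{(R-r)r^2}$, $|\nabla_v\chi|\lesssim\frac{1}{R-r}$. The only notable presentational difference is the density step: the paper uses a Galilean-adapted mollifier $\psi_n$ in the variables $(t,x-tv,v)$ to symmetrise $\langle\cT f, f\varphi^2\rangle$ into $-\tfrac12\langle f^2,\cT\varphi^2\rangle$ directly, whereas you invoke Steklov averaging and the time-cutoff trick to handle the top face; both are standard and valid.
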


\begin{proof}[Proof of Proposition~\ref{prop:EE}]
  Consider $\varphi$ a smooth function valued in $[0,1]$ that is
  equal to $1$ on $Q_r(z_0)$ and $0$ outside $Q_R(z_0)$. In order
  to use $f\varphi^2$ as a test function, we argue by
  density. Introduce
  \begin{equation*}
    \psi_n * [f \varphi] \varphi (z) := 
    \int_{z'\in\R^{2d+1}} \psi_n \left(t-t',x-tv-(x'-t'v'),v-v'
    \right) f(z') \varphi(z') \varphi(z)
  \end{equation*}
  where $\psi_n(t,x,v)=n^{4d+2} \psi(n^2 t,n^3 x, n v)$ and
  $\psi(t,x,v):= \pi^{-d-\frac{1}{2}} e^{-t^2-|x|^2-|v|^2}$.
  Then
  \begin{align*}
    I_n:=  \big\langle \cT f, \psi_n * [f \varphi] \varphi 
    \big\rangle
    =  \big\langle f, \psi_n * [(\cT f)  \varphi] \varphi 
    \big\rangle 
    & = \frac{1}{2} \left[\big\langle \cT f, \psi_n * [f \varphi] \varphi 
      \big\rangle + \big\langle f, \psi_n * [(\cT f)  \varphi] \varphi 
      \big\rangle \right] \\
    & = \frac{1}{2} \left[-\big\langle f, \psi_n * [f \varphi] (\cT\varphi) 
      \big\rangle -\big\langle f, \psi_n * [f (\cT \varphi)] \varphi 
      \big\rangle  \right]
  \end{align*}
  which converges to
  $-\frac{1}{2} \big\langle f^2, \cT\varphi^2 \big\rangle$ as $n
  \to \infty$. The
  other terms in the inequation converge thanks to the bound
  $f \in L^\infty ((a,b);L^2 (\Omega_x \times \Omega_v)) \cap L^2
  ((a,b) \times \Omega_x;H^1(\Omega_v))$. We deduce
  \begin{align*}
    \lambda \int_{Q_R(z_0)} |\nabla_v f|^2
    \varphi^2 \dd z 
    & \le \int_{Q_R(z_0)} f^2 \Big( |\partial_t \varphi|
      \varphi + (|v_0|+R) | \nabla_x \varphi| \varphi \Big) \dd z
      + \Lambda \int_{Q_R(z_0)} |\nabla_v
      f| |\nabla_v \varphi| f \varphi \dd z \\
    & \qquad + \Lambda
      \int_{Q_R(z_0)} |\nabla_v f| f \varphi^2 \dd z +
      \int_{Q_R(z_0)} f |S| \varphi^2 \dd z.
  \end{align*}
  The result follows from Cauchy-Schwarz' inequality
  and $|\partial_t \varphi| \lesssim \frac{1}{(R-r)r}$,
  $|\nabla_x \varphi| \lesssim \frac{1}{(R-r)r^2}$, 
  $|\nabla_v \varphi| \lesssim \frac{1}{(R-r)}$.
\end{proof}

\subsection{Integral estimates on Kolmogorov fundamental
  solutions}

We denote $\cK := \cT - \Delta_v$.
\begin{lemma}[Estimates on the fundamental solution with constant
  coefficients]
  \label{lem:Kolmogorov}
  Consider $f \ge 0$ locally integrable so that
  $\cK f = \nabla_v \cdot F_1 + F_2 - m$ with
  $F_1,F_2 \in L^1 \cap L^2(\R_- \times \R^{2d})$ and
  $0 \le m \in M^1(\R_- \times \R^{2d})$ (a non-negative measure
  with finite mass) and where $F_1, F_2$ and $m$ have compact
  support in time included in some $\left(-\tau,0\right]$. Then
  there for any $p \in [2,2+\frac1d)$ and
  $\sigma \in [0,\frac13)$
  \begin{align}
    \label{eq:Kolm1}
    & \| f\|_{L^{p}(\R_- \times \R^{2d})} \lesssim_{\tau,\lambda,\Lambda}
      \left(2+\frac1d-p\right)^{-1}
      \left[ \left\| F_1 \right\|_{L^2(\R_- \times \R^{2d})} +
      \left\| F_2 \right\|_{L^2(\R_- \times \R^{2d})} \right] \\
    \label{eq:Kolm2}
    & \| f\|_{L^1_{t,v} W^{\sigma,1}_x(\R_- \times \R^{2d})}
      \lesssim_{\tau,\lambda,\Lambda}
      \left(\frac13-\sigma \right)^{-1} \left[ \|
      F_1 \|_{L^1(\R_- \times \R^{2d})} + \left\| F_2
      \right\|_{L^1(\R_- \times \R^{2d})} + \| m \|_{M^1(\R_- \times
      \R^{2d})} \right].
  \end{align}
\end{lemma}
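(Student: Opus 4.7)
The plan is to use the explicit Gaussian fundamental solution $K(t,x,v)$ of $\cK = \partial_t + v \cdot \nabla_x - \Delta_v$, which by the scaling invariance $(t,x,v) \mapsto (r^2 t, r^3 x, r v)$ has the form $K(t,x,v) = t^{-2d} \Phi(t^{-3/2} x, t^{-1/2} v) \mathbf{1}_{t>0}$ for a fixed non-negative Schwartz Gaussian $\Phi$. The starting point is the Duhamel representation
\begin{equation*}
  f = K * F_2 + (\nabla_v K) * F_1 - K * m \quad \text{on } \R_- \times \R^{2d},
\end{equation*}
justified by the causality of $K$ (supported in $t \ge 0$) and the compact time support of $F_1, F_2, m$ in $(-\tau, 0]$, together with the identity $K * (\nabla_v \cdot F_1) = (\nabla_v K) * F_1$ obtained by transferring the derivative through the convolution.

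For \eqref{eq:Kolm1}, since $f \ge 0$ and $K * m \ge 0$, the representation yields the pointwise bound $f \le K * F_2 + (\nabla_v K) * F_1$, hence
\begin{equation*}
  \| f \|_{L^p(\R_- \times \R^{2d})} \le \| K * F_2 \|_{L^p} + \| (\nabla_v K) * F_1 \|_{L^p}.
\end{equation*}
I would then apply Young's convolution inequality in $\R^{1+2d}$ with exponent $q = 2p/(p+2)$ determined by $1 + 1/p = 1/q + 1/2$. A direct change of variables gives $\| K(t) \|_{L^q_{x,v}} \sim t^{-2d(1-1/q)}$ and $\| \nabla_v K(t) \|_{L^q_{x,v}} \sim t^{-1/2 - 2d(1-1/q)}$, so the more restrictive finiteness condition on $t \in (0, \tau)$ is $q/2 + 2d(q-1) < 1$, which rearranges to $p < 2 + 1/d$. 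The blow-up factor $\bigl(1 - q/2 - 2d(q-1)\bigr)^{-1/q}$ is then controlled by $(2 + 1/d - p)^{-1}$ since $q \ge 1$ and $(2+1/d - p) \le 1/d$.

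For \eqref{eq:Kolm2}, I would apply a fractional $x$-derivative $(-\Delta_x)^{\sigma/2}$ (or equivalently estimate the Slobodeckij seminorm via $x$-translates) to each term in the Duhamel representation and use Young in $L^1$ termwise. The same Gaussian scaling, now in the variable $x t^{-3/2}$, gives
\begin{equation*}
  \| (-\Delta_x)^{\sigma/2} K(t) \|_{L^1_{x,v}} \sim t^{-3\sigma/2}, \quad \| (-\Delta_x)^{\sigma/2} \nabla_v K(t) \|_{L^1_{x,v}} \sim t^{-1/2 - 3\sigma/2},
\end{equation*}
so integrability on $t \in (0, \tau)$ holds precisely when $\sigma < 1/3$, with blow-up $(1/3 - \sigma)^{-1}$ driven by the $\nabla_v K * F_1$ term; the $K * F_2$ and $K * m$ contributions are softer (threshold $\sigma < 2/3$) and get absorbed into the stated bound.

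The main obstacles are (i) making the Duhamel representation rigorous for $f$ only assumed non-negative and locally integrable, which uses the compact time support of the source together with the causality of $\cK$ to eliminate any homogeneous contribution from the past; and (ii) carefully tracking the dependence of constants near the critical exponents, which ultimately reduces to computing elementary integrals of the form $\int_0^\tau t^{-\alpha}\,dt$ with $\alpha$ approaching $1$ from below.
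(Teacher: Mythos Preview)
Your approach to \eqref{eq:Kolm1} is essentially the paper's: use $f\ge 0$ and $K*m\ge 0$ to discard the measure term, transfer the $v$-divergence onto the kernel, and apply Young (the paper remarks that Young holds on the unimodular group $(\R^{1+2d},\circ)$, which is what your time-sliced computation encodes). One correction: the identity $K*(\nabla_v\cdot F_1)=(\nabla_vK)*F_1$ is \emph{false} for this group convolution, because the kernel $K(t-t',x-x'-(t-t')v',v-v')$ depends on $v'$ through both its second and third arguments; integrating by parts in $v'$ actually produces $[(t-t')\nabla_xK+\nabla_vK]*F_1$. This is harmless for the estimates since $|t\nabla_xK|$ and $|\nabla_vK|$ obey the same pointwise bound $t^{-2d-1/2}e^{-c(\cdot)}$ (as the paper also records), so your scaling computations and thresholds survive unchanged, but the formula needs this fix.

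For \eqref{eq:Kolm2} you take a genuinely shorter route than the paper. The paper splits the kernel in time as $G=G_\varepsilon+G_\varepsilon^\bot$ with $G_\varepsilon=\chi(t/\varepsilon)G$, proves the integer-order bounds $\|\nabla_vG_\varepsilon\|_{L^1}\lesssim\varepsilon^{1/2}$ and $\|\nabla_vG_\varepsilon^\bot\|_{L^1_{t,v}W^{l,1}_x}\lesssim\varepsilon^{-3l/2-1/2}$, and then interpolates via a Littlewood--Paley decomposition in $x$ with $\varepsilon=\varepsilon_k$ optimized per frequency shell. You instead apply $(-\Delta_x)^{\sigma/2}$ (or, equivalently and even more directly, the $x$-difference for the Gagliardo seminorm) straight to the Gaussian kernel and read off $\|(-\Delta_x)^{\sigma/2}\nabla_vK(t)\|_{L^1_{x,v}}\sim t^{-1/2-3\sigma/2}$ from scaling, so that the threshold $\sigma<\tfrac13$ and the blow-up $(\tfrac13-\sigma)^{-1}$ come from $\int_0^\tau t^{-1/2-3\sigma/2}\dd t$. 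Your argument is cleaner because the kernel is an explicit Gaussian (so its fractional $x$-derivative stays in $L^1_{x,v}$); the paper's splitting-plus-interpolation is longer but only uses pointwise bounds on integer-order derivatives of $G$ and would port more easily to less explicit kernels.
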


\begin{proof}[Proof of Lemma~\ref{lem:Kolmogorov}]
We use the fundamental solution computed by Kolmogorov
  in~\cite{kolmogorov} (see for
  instance~\cite[Appendix~A]{MR4113786} for details):
  \begin{align*}
    & \forall \, t \in \R_-, \ x,v \in \R^d, \quad
      f(t,x,v) =  \int_{(t',x',v')\in \R^{2d+1}} G(t-t', x-x'
      -(t-t')v',v-v') (\cK f)(t',x',v') \\
    &    \forall \, t \ge 0, \ x,v \in \R^d, \quad
      G(t,x,v) :=
      \begin{dcases}
        \left( \frac{3}{4\pi^2 t^4} \right)^{\frac{d}{2}}
        \exp \left[ - \frac{3 \left| x - \frac{t}{2} v
        \right|^2}{t^3} - \frac{|v|^2}{4t} \right] & \mbox{ if } t>0,\\
        0 &  \mbox{ if } t\leq 0.
      \end{dcases}
  \end{align*}
  Since $f$ and $G$ are non-negative, we deduce that
  \begin{align*}
    0 \le f(t,x,v) \le \int_{(t',x',v')\in \R^{2d+1}} 
    G(t-t', x-x'-(t-t')v',v-v')
    \Big[(\nabla_{v'} \cdot F_1) (t',x',v')+ F_2 (t',x',v') \Big]
  \end{align*}
  and since
  \begin{align*}
    \forall \, t \ge 0, \ x,v \in \R^d, \quad 
    |\nabla_v G(t,x,v)| +t  |\nabla_x G(t,x,v)|
    \lesssim t^{-2d-\frac12}
    \exp \left[ - \frac{3 \left| x - \frac{t}{2} v
    \right|^2}{2t^3} - \frac{|v|^2}{8t} \right]
  \end{align*}
  we have
  $\nabla_v G, t \nabla_x G \in L^{\frac{2d+1}{2d+1/2}-0}
  (\left(0,\tau\right) \times \R^{2d})$ and therefore by
  integration by parts
  \begin{align*}
    f(t,x,v) \le \int_{(t',x',v')\in \R^{2d+1}} 
    \nabla_{v'}  G(t-t',x-x'-(t-t')v',v-v') F_1(t',x',v')\\
    + \int_{(t',x',v') \in \R^{2d+1}} 
    G(t-t',x-x'-(t-t')v',v-v') F_2(t',x',v')
  \end{align*}
  and Young's convolution inequality (which works in unimodular
  spaces like $(\R^{2d+1},\circ)$ with the Lebesgue measure), we
  deduce, by tracking down the dependency in $p$ of the constant
  \begin{align*}
    \forall \, p \in \left[2,2+\frac1d\right), \quad
    \| f \|_{L^{p}(\R_- \times \R^{2d})} \lesssim_\tau
    \left( 2+ \frac1d - p \right)^{-1} \left[ \| F_1 \|_{L^2(\R_-
    \times \R^{2d})}
    + \left\| F_2 \right\|_{L^2(\R_- \times \R^{2d})} \right].
  \end{align*}
  (The threshold $2+\frac1d$ is likely to be optimal.) This
  proves~\eqref{eq:Kolm1}. To prove~\eqref{eq:Kolm2} split
  \begin{align*}
    G = G_\var + G_\var ^\bot \quad \text{ with } \quad
    G_\var(t,x,v) := \chi \left( \frac{t}{\var} \right) G(t,x,v)
  \end{align*}
  where $\var >0$ and $\chi$ is a smooth function on $\R_+$
  valued in $[0,1]$ equal to $1$ in $[0,1]$ and $0$ on
  $[2,+\infty)$. We have the following simple estimates for every
  $l \in \mathbb{N}$
  \begin{align*}
    & \left|\nabla_x^l G_\var ^\bot(t,x,v)\right| \lesssim_l
      \var^{-\frac32 l}t^{-2d}
      \exp \left[ - \frac{3 \left| x - \frac{t}{2} v
      \right|^2}{2t^3} - \frac{|v|^2}{8t} \right] \\
    & \left|\nabla_v \nabla_x^l G_\var ^\bot(t,x,v)\right|
      + t\left|\nabla_x \nabla_x^l G_\var ^\bot(t,x,v)\right|
      \lesssim_l \var^{-\frac32 l -\frac12} t^{-2d}
      \exp \left[ - \frac{3 \left| x - \frac{t}{2} v
      \right|^2}{2t^3} - \frac{|v|^2}{8t} \right]
  \end{align*}
  which straightforwardly implies (assuming $\tau \ge 1$ and
  $\var <1$ wlog)
  \begin{align*}
    & \left\| G_\var^\bot \right\|_{L^1_{t,v}((0,\tau)
      \times \R^d; W^{l,1}_x(\R^d)))} +
      \left\| \nabla_v G_\var ^\bot \right\|_{L^1_{t,v}((0,\tau)
      \times \R^d ; W^{l,1}_x(\R^d))}
      + \left\|t\nabla_x G_\var^\bot \right\|_{L^1_{t,v}((0,\tau)
      \times \R^d ; W^{l,1}_x(\R^d))}
      \lesssim_l \tau \var^{-\frac32 l -\frac12} \\
    & \| G_\var \|_{L^1((0,\tau) \times \R^{2d})} + \left\|
      \nabla_v G_\var \right\|_{L^1((0,\tau) \times \R^{2d})} 
      +  \left\|t\nabla_x G_\var \right\|_{L^1((0,\tau) \times \R^{2d})}
      \lesssim \tau \var^{\frac12}.  
  \end{align*}
  The splitting $G=G_\var + G_\var ^\bot$ yields
  $f = f_\var + f_\var ^\bot$, and the convolution inequality
  $M^1 * L^1 \to L^1$ implies
  \begin{align*}
    & \| f_\var \|_{L^1(\R_- \times \R^{2d})} \lesssim
      \tau \var^{\frac12} \left(  \| F_1 \|_{L^1(\R_- \times \R^{2d})} +
      \left\| F_2 \right\|_{L^1(\R_- \times \R^{2d})}
      + \| m \|_{M^1(\R_- \times
      \R^{2d})} \right) \\
    & \left\| f_\var ^\bot \right\|_{L^1_{t,v}W^{l,1}_x(\R_-
      \times \R^{2d})} \lesssim
      \tau \var^{-\frac32 l -\frac12}
      \left[ \| F_1 \|_{L^1(\R_- \times \R^{2d})} +
      \left\| F_2 \right\|_{L^1(\R_- \times \R^{2d})} 
      + \| m \|_{M^1(\R_- \times \R^{2d})} \right].
  \end{align*}
  Since this decomposition holds for all $\var >0$, it implies by
  standard interpolation the estimate~\eqref{eq:Kolm2} for any
  $\sigma \in [0,\frac{1}{3})$ (again the exponent is likely to
  be optimal but in any case our constant degenerates as
  $\sigma \to \frac{1}{3}$). In order to be self-contained let us
  a give a short proof. Given $\sigma \in [0,\frac13)$, we
  Fourier-transform and decompose dyadically, defining
  $\langle \xi \rangle:= (1+|\xi|^2)^{\frac{1}{2}}$
  \begin{align}
    \nonumber
    (1-\Delta_x)^{\frac{\sigma}{2}} f(t,x,v)
    & = \int_{\xi,y \in \R^d} e^{i \xi
      \cdot (x-y)} \langle \xi \rangle^\sigma
      f(t,y,v) 
     = \sum_{k \ge -1} \int_{\xi,y \in \R^d} e^{i \xi
      \cdot (x-y)} a_k(\xi) f(t,y,v) \\
    \label{eq:decomp}
    & = \sum_{k \ge -1} \int_{\xi,y \in \R^d} e^{i \xi
      \cdot (x-y)} B_k(\xi) (1-\Delta_y)^{\frac{l}{2}} f(t,y,v)
  \end{align}
  where $a_k(\xi) := \langle \xi \rangle^\sigma \varphi_k$ and
  $B_k(\xi) := \langle \xi \rangle^{\sigma-l} \varphi_k$, and
  where we have defined in the standard way
  $\varphi_k(\xi) := [ \chi(2^{-k}\xi) - \chi(2^{-k+1} \xi) ]$
  for $k \ge 0$ with $\chi$ a smooth function valued in $[0,1]$
  and equal to $1$ in $B(0,1)$ and $0$ outside $B(0,2)$, and
  $\varphi_{-1}(\xi) = \sum_{k \le -1} [ \chi(2^{-k}\xi) -
  \chi(2^{-k+1} \xi) ]$. For a given $F=F(y)$ one has
  \begin{align*}
    & \int_{x \in \R^d} \left| \int_{\xi,y \in \R^d} e^{i \xi
      \cdot (x-y)} a_k(\xi) F(y)
      \right| \lesssim 2^{k\sigma} \| F \|_{L^1} \\
    & \int_{x \in \R^d} \left| \int_{\xi,y \in \R^d} e^{i \xi
      \cdot (x-y)} B_k(\xi) (1-\Delta_y) F(y)
      \right| \lesssim 2^{k(\sigma-l)} \| F \|_{W^{l,1}}
  \end{align*}
  by splitting the integrand into $|x-y| \le 2^{-k}$ and
  $|x-y| > 2^{-k}$ and integrating by parts the operator
  $\Delta_\xi^{\frac{\ell}{2}}$ with $\ell$ even and strictly
  greater than $d$. We then use the
  decomposition~\eqref{eq:decomp} in the ``$a_k$'' form on
  $f_\var$ and in the ``$B_k$'' form on $f_\var ^\bot$, and with
  a $\var=\var_k$ depending on $k$ defined below:
  \begin{align*}
    & \left\| (1-\Delta_x)^{\frac{\sigma}{2}} f \right\|_{L^1(\R_-
    \times \R^{2d})} \\
    & \lesssim \tau \sum_{k \ge -1} \left( \var_k
      ^{\frac12} 2^{k\sigma} + \var_k ^{-\frac{3}{2}l -\frac12}
      2^{k(\sigma-l)} \right)
      \left[ \| F_1 \|_{L^1(\R_- \times \R^{2d})}
      + \left\| F_2 \right\|_{L^1(\R_- \times \R^{2d})} 
      + \| m \|_{M^1(\R_- \times \R^{2d})} \right]\\
    & \lesssim
      \frac{\tau}{\delta}  \left[ \| F_1 \|_{L^1(\R_- \times \R^{2d})}
      + \left\| F_2 \right\|_{L^1(\R_- \times \R^{2d})}
      + \| m \|_{M^1(\R_- \times \R^{2d})} \right]
  \end{align*}
  with the choice $\sigma = \frac13 - \delta \in [0,\frac13)$ and
  $\var_k := 2^{-2k(\frac13-\frac{\delta}{2})}$ and
  $l> 1+\frac{4}{9\delta}$. This concludes the proof.
\end{proof}

\subsection{Integral estimates for sub-solutions}

We combine the previous lemma with a localization argument and
the energy estimate to get the
\begin{proposition}[Integral regularization estimates for
  non-negative sub-solutions]
  \label{prop:Gain sol}
  Let $f$ be a non-negative weak sub-solution
  to~\eqref{e:main}-\eqref{e:hyp-coef} in an open set
  $\cU \in \R^{1+2d}$. Given any
  $Q_r(z_0) \subset Q_R(z_0) \subset \cU$ with $0<r<R \le 1$, and
  any $p \in [2,2+\frac1d)$ and $\sigma \in [0,\frac13)$, $f$
  satisfies
  \begin{align}
    \label{eq:gain-int}
    \|f\|_{L^{p}(Q_r(z_0))} \lesssim \left( 2+ \frac1d - p\right)^{-1}
    \cC'(r,R,v_0) \left[ \norm{f}_{L^{2}(Q_R(z_0))}
    + \|S\|_{L^2(Q_R(z_0))} \right] \\
    \label{eq:gain-reg}
    \| f \|_{L^1_{t,v}W^{\sigma,1}_x(Q_r(z_0))} \lesssim \left(
    \frac13 - \sigma \right)^{-1}
    \cC''(r,R,v_0) \left[ \norm{f}_{L^{2}(Q_R(z_0))}
    + \|S\|_{L^2(Q_R(z_0))} \right]
  \end{align}
  where $\cC$ was defined in~\eqref{eq:C} and 
  \begin{align*}
    \cC'(r,R,v_0) := \left( 1+ \frac{1}{R-r} \right) \cC(r,R,v_0)
    \quad \text{ and } \quad 
    \cC''(r,R,v_0) := R^{1+2d} \left( 1+ \frac{1}{R-r} \right)
    \cC(r,R,v_0).
  \end{align*}
\end{proposition}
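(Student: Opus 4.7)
The plan is to combine the Kolmogorov estimates of Lemma~\ref{lem:Kolmogorov} with the energy estimate of Proposition~\ref{prop:EE} via a cutoff argument. Let $r' := \frac{r+R}{2}$ and let $\varphi$ be a smooth cutoff valued in $[0,1]$, equal to $1$ on $Q_r(z_0)$ and supported in $Q_{r'}(z_0)$, satisfying $|\nabla_v\varphi|\lesssim (R-r)^{-1}$, $|\nabla_x\varphi|\lesssim (R-r)^{-1}r^{-2}$ and $|\partial_t\varphi|\lesssim (R-r)^{-1}r^{-1}$. Using the translation invariance of \eqref{e:main}, I may assume $Q_{r'}(z_0) \subset (-\tau,0] \times \R^{2d}$ for some $\tau\lesssim 1$, so that $g:=f\varphi\ge 0$ has compact support in $(-\tau,0]\times \R^{2d}$.

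Testing Definition~\ref{d:weak} with the admissible renormalization $\beta(s)=s$ against test functions of the form $\varphi\psi$ (with $\psi\ge 0$) and integrating by parts to move $v$-derivatives onto $\varphi$ whenever possible yields the distributional inequality $\cK g \le \nabla_v\cdot F_1 + F_2$, equivalently $\cK g = \nabla_v\cdot F_1 + F_2 - m$ for some non-negative distribution (hence a non-negative Radon measure) $m$ supported in $Q_{r'}(z_0)$, with
\begin{align*}
F_1 &:= \varphi(A-I)\nabla_v f - f\,\nabla_v\varphi,\\
F_2 &:= -\nabla_v\varphi\cdot(A-I)\nabla_v f + \varphi B\cdot\nabla_v f + \varphi S + f\,\cT\varphi - \nabla_v f\cdot \nabla_v\varphi.
\end{align*}
Using $|A|,|B|\le \Lambda$ together with the derivative bounds on $\varphi$ (noting that $|\cT\varphi|\lesssim (R-r)^{-1}r^{-1} + (|v_0|+R)(R-r)^{-1}r^{-2}$ on $Q_{r'}(z_0)$), and controlling $\|\nabla_v f\|_{L^2(Q_{r'}(z_0))}$ via Proposition~\ref{prop:EE} applied at radii $r',R$ (after a Cauchy--Schwarz on the $\int f|S|$ term), I obtain
\begin{equation*}
\|F_1\|_{L^2} + \|F_2\|_{L^2} \;\lesssim_{\lambda,\Lambda}\; \cC'(r,R,v_0)\bigl[\|f\|_{L^2(Q_R(z_0))} + \|S\|_{L^2(Q_R(z_0))}\bigr],
\end{equation*}
and \eqref{eq:Kolm1} applied to $g$ immediately gives \eqref{eq:gain-int}, after restriction to $Q_r(z_0)$ where $g=f$.

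For \eqref{eq:gain-reg}, the total variation of $m$ is estimated by integrating the identity $\cK g = \nabla_v\cdot F_1 + F_2 - m$ against the constant $1$: both $\int \cK g$ and $\int \nabla_v\cdot F_1$ vanish thanks to the compact support of $g$ and $F_1$, so $\|m\|_{M^1} = \int d m = \int F_2 \le \|F_2\|_{L^1}$. Cauchy--Schwarz then converts the previous $L^2$ bounds on $F_1, F_2$ into $L^1$ bounds with an extra factor $|Q_R|^{1/2}\sim R^{2d+1}$, accounting exactly for the $R^{1+2d}$ in $\cC''$, and \eqref{eq:Kolm2} concludes. The main obstacle is purely bookkeeping — carefully identifying $F_1, F_2$ and $m$ and tracking how the factors $R-r$, $r$, $|v_0|+R$ combine to reproduce $\cC'$ and $\cC''$ exactly — since the underlying analytic inputs (energy estimate, distributional calculus, and Kolmogorov bounds) have already been established upstream.
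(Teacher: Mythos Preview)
Your proof is correct and follows essentially the same route as the paper: localize with a cutoff, write $\cK g=\nabla_v\cdot F_1+F_2-m$, control $F_1,F_2$ in $L^2$ via the energy estimate to feed into~\eqref{eq:Kolm1}, and bound $\|m\|_{M^1}$ by integrating the localized equation against the constant~$1$ before applying~\eqref{eq:Kolm2}. The only cosmetic difference is that the paper introduces a second, slightly larger cutoff $\varphi_2$ to estimate the defect measure $\bar m$ on an intermediate cylinder, whereas you integrate the equation for $g=f\varphi$ directly; your variant is a touch more streamlined but otherwise identical in substance and constants.
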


\begin{proof}[Proof of Proposition~\ref{prop:Gain sol}]
  Since $f$ is a sub-solution to~\eqref{e:main}, there is a
  non-negative measure $\bar m \ge 0$ so that
  \begin{align*}
    \cT f =\nabla_v\cdot (A\nabla_v f) + B \cdot \nabla_v f
    + S - \bar m.
  \end{align*}
  Consider $\varphi_1$ smooth valued in $[0,1]$ and equal to $1$
  on $Q_r(z_0)$ and $0$ outside $Q_{r+\frac{R-r}{2}}(z_0)$ and
  $g_1:=\varphi_1 f$. The latter satisfies
  \begin{equation}
    \label{eq:ineqKg}
    \cK g_1 = \nabla_v \cdot F_1+F_2 - m \quad \text{ with }
      \begin{dcases}
        m:= \bar m \varphi_1, \\
        F_1 := (A\nabla_v f) \varphi_1-(\nabla_v
        f) \varphi_1 - f \nabla_v \varphi_1, \\
        F_2:= -A\nabla_v f\cdot \nabla_v \varphi_1+ \left( B
          \cdot \nabla_v f \right)\varphi_1 + S\varphi_1 + f \cT
        \varphi_1.
      \end{dcases}
  \end{equation}
  The energy estimate in Proposition \ref{prop:EE} implies
  \begin{align*}
    \| F_1 \|_{L^2(\R_- \times \R^{2d})} + \| F_2 \|_{L^2(\R_- \times \R^{2d})} 
    & \lesssim \left( 1+\frac{1}{R-r} \right)
      \cC\left(r+\frac{R-r}{2}, R,v_0\right)
      \left( \| f  \|_{L^2\left(Q_R(z_0)\right)} 
      + \| S \|_{L^2(Q_R(z_0))}  \right) \\
    & \lesssim \cC'(r,R,z_0) \left( \| f
      \|_{L^2(Q_R(z_0))}  + \| S \|_{L^2(Q_R(z_0))}  \right)
  \end{align*}
  which, combined with~\eqref{eq:Kolm1},
  shows~\eqref{eq:gain-int}.
  
  Consider then $\varphi_2$ smooth valued in $[0,1]$ and equal to
  $1$ on $Q_{r+\frac{R-r}{2}}(z_0)$ and $0$ outside $Q_R(z_0)$
  and $g_2:=\varphi_2 f$. The function $g_2$ satisfies a similar
  equation as $g_1$ in \eqref{eq:ineqKg}, with $\varphi_2$
  replacing $\varphi_1$. Integrating this equation simply against
  $1$ yields (thanks to the cancellation of divergence terms)
  \begin{align*}
    \| \bar m \|_{M^1\left(Q_{r+\frac{R-r}{2}}(z_0)\right)}
    & \lesssim 
      \| \varphi_2 m \|_{M^1(\R_- \times \R^{2d})} \lesssim
      \int_{Q_{r+\frac{R-r}{2}}(z_0)} \left[ -A\nabla_v f\cdot
      \nabla_v \varphi_2 + \left( B \cdot \nabla_v f
      \right)\varphi_2 + S\varphi_2 + f \cT \varphi_2 \right] \\
    & \lesssim \cC\left(r+\frac{R-r}{2},R,v_0 \right) \| f
      \|_{L^1(Q_R(z_0))} + \| S \|_{L^1(Q_R(z_0))}\\
    & \lesssim \cC\left(r,R,v_0
      \right)\left[ \| f \|_{L^2(Q_R(z_0))} + \| S
      \|_{L^2(Q_R(z_0))} \right].
  \end{align*}
  Combined with~\eqref{eq:Kolm2} and (thanks to the localization)
  \begin{align*}
    \| F_1 \|_{L^1(\R_- \times \R^{2d})} +    \| F_2 \|_{L^1(\R_-
    \times \R^{2d})}  \lesssim  \| F_1 \|_{L^2(\R_- \times
    \R^{2d})} + \| F_2 \|_{L^2(\R_- \times \R^{2d})},
  \end{align*}
  it implies~\eqref{eq:gain-reg}.
\end{proof}

\subsection{Iterated gain of integrability for sub-solutions}
 
We give a short proof of this result first obtained in
\cite[Theorem~1.2]{pp} and then proved
differently~\cite[Theorem~12]{gimv}. This is the counterpart of
the ``first lemma of De Giorgi'' for elliptic equations, in the
context of kinetic hypoelliptic equations. We allow for an
initial integrability $L^\zeta$ with exponent $\zeta \in (0,2)$
(such extension is well-known for elliptic equations).

\begin{proposition}[Upper bound for sub-solutions]
  \label{prop:1st lemma}
  Let $f$ be a non-negative weak sub-solution
  to~\eqref{e:main}-\eqref{e:hyp-coef} in an open set
  $\cU \in \R^{1+2d}$. Given any
  $Q_r(z_0) \subset Q_R(z_0) \subset \cU$ with $0<r<R \le 1$, and
  $\zeta >0$, $f$ satisfies
  \begin{equation*}
    \| f \|_{L^\infty(Q_r(z_0))} \lesssim_{\lambda,\Lambda}
    \left( \frac{1+|v_0|}{r^2(R-r)^3}
    \right)^{\frac{1+4d}{\zeta}}
    \left[ \|f\|_{L^\zeta(Q_R(z_0))} + \|S\|_{L^\infty(Q_R(z_0))}\right].
  \end{equation*}
\end{proposition}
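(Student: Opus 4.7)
My plan is a De Giorgi iteration on super-level sets, driven by the $L^2 \to L^p$ gain of integrability from Proposition~\ref{prop:Gain sol}, followed by a standard interpolation--absorption step to pass from the case $\zeta = 2$ to $\zeta \in (0, 2)$.

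I first treat $\zeta = 2$. Fix $p \in (2, 2+1/d)$ close to the endpoint (so $\delta := 1 - 2/p$ is of order $1/(2d+1)$), choose a free truncation level $M > 0$, and introduce the dyadic sequences $\ell_k := M(1 - 2^{-k}) \nearrow M$ and $r_k := r + (R-r)2^{-k} \searrow r$. The truncations $f_k := (f - \ell_k)_+$ are again non-negative weak sub-solutions with source $S\mathbf{1}_{\{f > \ell_k\}}$, by the renormalisation formulation of Definition~\ref{d:weak} applied with a $C^2$ approximation of $(\cdot)_+$. Applying Proposition~\ref{prop:Gain sol} on the nested pair $Q_{r_{k+1}} \subset Q_{r_k}$, where the gap $r_k - r_{k+1} = (R-r)2^{-k-1}$ produces a geometric factor $2^{3k}$ via the $(R-r)^{-3}$ scaling of $\cC'$, one obtains
\[
\|f_{k+1}\|_{L^p(Q_{r_{k+1}})} \;\leq\; \|f_k\|_{L^p(Q_{r_{k+1}})} \;\lesssim\; 2^{3k}\, C_0\,\bigl(\|f_k\|_{L^2(Q_{r_k})} + \|S\|_{L^\infty}|Q_R|^{1/2}\bigr),
\]
with $C_0 \lesssim \tfrac{1+|v_0|}{r^2(R-r)^3}$. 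The inclusion $\{f_{k+1}>0\} \subset \{f_k > M 2^{-k-1}\}$ combined with Markov's inequality bounds the support measure by $4^{k+1}M^{-2}\|f_k\|_{L^2}^2$, and H\"older's inequality on this super-level set yields the nonlinear recurrence
\[
U_{k+1} \;\leq\; \frac{K B^k}{M^{2\delta}}\, U_k^{1+\delta}, \qquad U_k := \|f_k\|_{L^2(Q_{r_k})}^2 + \|S\|_{L^\infty}^2 |Q_R|,
\]
with $K$ polynomial in $C_0$ and $B = B(p) > 1$ absolute. Stampacchia's classical shut-off lemma then forces $U_k \to 0$ (whence $f \leq M$ a.e.\ in $Q_r(z_0)$) whenever $M^2 \gtrsim K^{1/\delta} B^{1/\delta^2} U_0$; saturating this constraint gives the $\zeta = 2$ estimate with a constant of the form $C_0^{O(d)}\bigl(\|f\|_{L^2(Q_R)}+\|S\|_{L^\infty}\bigr)$.

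To descend to arbitrary $\zeta \in (0,2)$, I interpolate
\[
\|f\|_{L^2(Q_\rho)}^2 \;\leq\; \|f\|_{L^\infty(Q_\rho)}^{2-\zeta}\,\|f\|_{L^\zeta(Q_R)}^{\zeta}
\]
and insert this into the $\zeta = 2$ bound between $Q_{\rho_1}$ and $Q_{\rho_2}$ for $r \leq \rho_1 < \rho_2 \leq R$. A weighted Young inequality gives, with $\Phi(\rho) := \|f\|_{L^\infty(Q_\rho)}$,
\[
\Phi(\rho_1) \;\leq\; \tfrac{1}{2}\, \Phi(\rho_2) + C(\rho_1, \rho_2, v_0)^{2/\zeta} \|f\|_{L^\zeta(Q_R)} + C(\rho_1, \rho_2, v_0)\,\|S\|_{L^\infty};
\]
iterating this across the dyadic family $\rho_k := r + (R-r)(1 - 2^{-k})$ and applying the standard Giaquinta/Stampacchia iteration lemma to absorb the $\tfrac{1}{2}\Phi(\rho_2)$ term produces the claimed estimate, the exponent of $C_0$ being multiplied by $2/\zeta$ in this process so that the two steps consolidate into the single factor $\bigl(\tfrac{1+|v_0|}{r^2(R-r)^3}\bigr)^{(1+4d)/\zeta}$. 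The main obstacle throughout is purely bookkeeping: one must choose $p$ close enough to $2+1/d$ to keep $1/\delta = O(d)$ but not so close that the $(2+1/d - p)^{-1}$ factor in Proposition~\ref{prop:Gain sol} blows up, and then carefully track the power of $\cC'$ through both steps to produce precisely the exponent $(1+4d)/\zeta$ announced in the statement.
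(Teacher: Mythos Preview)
Your argument is sound but follows a different route from the paper for the core $\zeta=2$ step. You run a De~Giorgi iteration on level-set truncations $f_k=(f-\ell_k)_+$; the paper instead runs a Moser-type iteration on \emph{powers}: fixing $p_0=2+\frac{1}{2d}$ and $q=p_0/2$, it uses that $\beta_{n,k}(f)\approx f^{q^n}$ is again a non-negative sub-solution (by Definition~\ref{d:weak} with convex $\beta$), applies the $L^2\to L^{p_0}$ gain of Proposition~\ref{prop:Gain sol} to these powers on a shrinking sequence of cylinders $Q_{r_n}(z_0)$, and passes to $L^\infty$ via the convergent infinite product $\prod_n(\cdot)^{q^{-n}}$. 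Both approaches produce the exponent $1+4d$ on $C_0=\frac{1+|v_0|}{r^2(R-r)^3}$ once the gain exponent is fixed near $2+\frac{1}{2d}$ so that $1/\delta=4d+1$. Your second step---interpolate $L^2$ between $L^\infty$ and $L^\zeta$, split by Young, and absorb the $\tfrac12\Phi(\rho_2)$ along a dyadic sequence of radii---is exactly what the paper does. The Moser route meshes slightly more directly with the renormalised formulation in Definition~\ref{d:weak}, while your De~Giorgi route is the more customary choice in the elliptic/parabolic literature; here neither has a real advantage.

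One point to tighten: with your definition $U_k=\|f_k\|_{L^2}^2+\|S\|_{L^\infty}^2|Q_R|$ the sequence cannot tend to zero (the second summand is constant), so the recurrence $U_{k+1}\le KB^kM^{-2\delta}U_k^{1+\delta}$ fails as written. The fix is routine: the source for $f_k$ is $S\mathbf{1}_{\{f>\ell_k\}}$, so replace the crude bound by $\|S\mathbf{1}_{\{f>\ell_k\}}\|_{L^2}\le\|S\|_{L^\infty}|\{f_k>0\}|^{1/2}$ and control this measure via Markov on $f_{k-1}$; the nonlinear recurrence then closes on $a_k=\|f_k\|_{L^2(Q_{r_k})}^2$ alone, and the additive $\|S\|_{L^\infty}$ re-enters only through the final choice of $M$.
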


\begin{proof}[Proof of Proposition~\ref{prop:1st lemma}]
  Fix $p_0:=2+\frac{1}{2d}$ and define $q:=\frac{p_0}{2}$ and
  $q_n := q^n$. Consider $\beta_{n,k}$ on $\R_+$ with
  $\beta_{n,k}' \ge 0$ and $\beta''_{n,k} \ge 0$ both bounded and
  so that $\beta_{n,k}(z) \to z^{q_n}$ as $k \to \infty$ and
  $\beta_{n,k}(z) \lesssim z^{q_n}$ and
  $\beta_{n,k}'(z) \lesssim z^{q_n-1}$ uniformly in $k \in
  \N^*$. The Definition~\ref{d:weak} implies that
  $\beta_{n,k}(f)$ is a weak sub-solution with source term
  $S_{n,k} := \beta_{n,k}'(f) S$. Define $r_0=R$ and
  $r_n := r_{n-1} - \delta n^{-2}$ with
  $\delta = \frac12 \left( \sum_{k \ge 1} k^{-2}
  \right)^{-1}(R-r)$. Since $p_0 \in [2,2+\frac1d)$, the
  estimate~\eqref{eq:gain-int} implies for all $n \ge 1$
  \begin{align*}
    \|\beta_{n,k}(f)\|_{L^{p_0}(Q_{r_n}(z_0))}  
    & \lesssim
      \cC'(r_n,r_{n-1},v_0) \left[
      \|\beta_{n,k}(f)\|_{L^{2}(Q_{r_{n-1}}(z_0))}
      + \|S_{n,k}\|_{L^2(Q_{r_{n-1}}(z_0))} \right] \\ 
    & \lesssim  \frac{(1+|v_0|)n^6}{r^2(R-r)^3} 
      \left[ \|\beta_{n,k}(f)\|_{L^{2}(Q_{r_{n-1}}(z_0))}
      + \|S_{n,k}\|_{L^2(Q_{r_{n-1}}(z_0))} \right]
  \end{align*}
  for $n \ge 1$, which means by taking $k \to \infty$ and coming
  back to $f$
  \begin{align*}
    \|f\|_{L^{2q_{n+1}}(Q_{r_n}(z_0))}  
    & \lesssim \left( \frac{(1+|v_0|)n^6}{r^2(R-r)^3}
      \right)^{\frac{1}{q^n}} 
      2^{-1+\frac{1}{q_n}} \left[
      \|f\|_{L^{2q_n}(Q_{r_{n-1}}(z_0))} + \|
      f\|_{L^{2q_n}(Q_{r_{n-1}}(z_0))}^{1-\frac{1}{q_n}}
      \|S\|_{L^\infty(Q_R(z_0))}^{\frac{1}{q_n}} \right] \\
    & \lesssim \left( \frac{(1+|v_0|)n^6}{r^2(R-r)^3}
      \right)^{\frac{1}{q^n}} 
      \left[ \left( 1 + \frac{1}{q_n} \right) 
      \|f\|_{L^{2q_n}(Q_{r_{n-1}}(z_0))} + \frac{1}{q_n} 
      \|S\|_{L^\infty(Q_R(z_0))} \right],
  \end{align*}
  assuming by induction
  $\|f\|_{L^{2q_n}(Q_{r_{n-1}}(z_0))} < +\infty$. The convergence
  of the infinite product then implies
  \begin{equation*}
    \|f\|_{L^\infty(Q_r(z_0))}  
    \lesssim \left( \frac{1+|v_0|}{r^2(R-r)^3}
    \right)^{1+4d} \left[ \|f\|_{L^2(Q_R(z_0))} +
      \|S\|_{L^\infty(Q_R(z_0))}\right].
  \end{equation*}
  This proves the claim when $\zeta \ge 2$. To prove it when
  $\zeta \in (0,2)$, we deduce from the previous estimate
  \begin{align*}
    \|f\|_{L^\infty(Q_r(z_0))}  + \|S\|_{L^\infty(Q_r(z_0))}
    \lesssim \left( \frac{1+|v_0|}{r^2(R-r)^3}
    \right)^{1+4d} \left[ \|f\|_{L^\infty(Q_R(z_0))} ^{1-\zeta}
    \|f\|_{L^\zeta(Q_R(z_0))} ^\zeta +
    \|S\|_{L^\infty(Q_R(z_0))}\right]
  \end{align*}
  and thus by Young inequality, the quantity $A(r) :=
  \|f\|_{L^\infty(Q_r(z_0))}  + \|S\|_{L^\infty(Q_r(z_0))}$ satisfies,
  for some $C>0$,
  \begin{align*}
    A(r) \le \frac12 A(R) + C \left( \frac{1+|v_0|}{r^2(R-r)^3}
    \right)^{\frac{1+4d}{\zeta}} \left[ \|f\|_{L^\zeta(Q_R(z_0))}  +
    \|S\|_{L^\infty(Q_R(z_0))}\right].
  \end{align*}
  Introducing an (increasing this time) sequence of radii
  $r_n := r_{n-1} + \delta n^{-2}$ we obtain by induction
  \begin{align*}
    & A(r_n) \le \frac12 A(r_{n+1}) + C n^{\frac{2+8d}{\zeta}}
    \left( \frac{1+|v_0|}{r^2(R-r)^3}
    \right)^{\frac{1+4d}{\zeta}} \left[ \|f\|_{L^\zeta(Q_R(z_0))}  +
    \|S\|_{L^\infty(Q_R(z_0))}\right] \\
    & A(r_0) \le \left( \frac12 \right)^n A(r_{n+1}) +
    C \left( \sum_{k=1} ^n \frac{k^{\frac{2+8d}{\zeta}}}{2^k} \right)
    \left( \frac{1+|v_0|}{r^2(R-r)^3}
    \right)^{\frac{1+4d}{\zeta}} \left[ \|f\|_{L^\zeta(Q_R(z_0))}  +
    \|S\|_{L^\infty(Q_R(z_0))}\right]
  \end{align*}
  which yields the result by taking $n \to \infty$ in the right
  hand side.
\end{proof}

\section{Intermediate-Value Lemma and oscillations}

\subsection{Weak Poincar\'e inequality} 

The adjective `weak' refers to the small additional $L^2$ error
term below.

\begin{proposition}[Hypoelliptic Poincar\'e inequality with error]
  \label{p:HPI lemma}
  Given any $\var\in (0,1)$ and $\sigma \in (0,\frac13)$, any
  non-negative sub-solution $f$ to
  \eqref{e:main}-\eqref{e:hyp-coef} on $Q_5$ satisfies
  \begin{equation}
    \label{HPI inequality}
    \norm{\left(f-\langle f
        \rangle_{Q_1^{-}}\right)_+}_{L^{1}(Q_1)}
    \lesssim_{\lambda,\Lambda} 
    \frac{1}{\varepsilon^{d+2}} \norm{\nabla_v f}_{L^1 (Q_5)}
    + \varepsilon^\sigma  \left(
    \frac13 - \sigma \right)^{-1} \norm{f}_{L^2(Q_5)} +
    \norm{S}_{L^2(Q_5)},
  \end{equation}
  where $Q_1^- := Q_1(-2,0,0) = (-3,-2] \times B_1 \times B_1$
  and
  $\langle f \rangle_{Q_1^{-}} := \fint_{Q_1^-} f :=
  \frac{1}{|Q_1^-|} \int_{Q_1 ^-} f$.
\end{proposition}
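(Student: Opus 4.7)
The plan is, for each pair $z_1=(t_1,x_1,v_1)\in Q_1$ and $z_2=(t_2,x_2,v_2)\in Q_1^-$, to bound the increment $f(z_1)-f(z_2)$ by following a broken trajectory whose middle leg is a free-transport characteristic averaged over an $\varepsilon$-thin bundle of intermediate velocities. Since $(f(z_1)-\langle f\rangle_{Q_1^-})_+\leq \fint_{Q_1^-}(f(z_1)-f(z_2))_+\,dz_2$, integrating such a bound in $z_1$ will yield \eqref{HPI inequality}. Because $t_1-t_2\in[1,3]$ and $|x_1-x_2|\leq 2$, the free-streaming velocity $\tilde v_0=(x_1-x_2)/(t_1-t_2)$ that links $(t_1,x_1)$ to $(t_2,x_2)$ lies in $B_2$; I pick a smooth probability bump $\phi_\varepsilon$ centered at $\tilde v_0$ and supported in $B_\varepsilon(\tilde v_0)$ with $\|\phi_\varepsilon\|_\infty\lesssim\varepsilon^{-d}$ and $\|\nabla\phi_\varepsilon\|_\infty\lesssim\varepsilon^{-d-1}$. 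For each $\tilde v$ in its support, I follow the path
\begin{equation*}
z_1 \longrightarrow (t_1,x_1,\tilde v) \longrightarrow \bigl(t_2,\,x_1-(t_1-t_2)\tilde v,\,\tilde v\bigr) \longrightarrow (t_2,x_2,\tilde v) \longrightarrow z_2,
\end{equation*}
which produces the decomposition $f(z_1)-f(z_2)=(A)+(B)+(C)+(D)$ with $(A),(D)$ pure-velocity legs, $(B)$ the free-transport leg, and $(C)$ a pure-position shift of size $(t_1-t_2)|\tilde v-\tilde v_0|\leq 3\varepsilon$.

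The velocity legs $(A)$ and $(D)$ are line integrals of $\nabla_v f$ along $v$-segments of length $O(1)$; after averaging against $\phi_\varepsilon$ and integrating over $Q_1\times Q_1^-$ one obtains a bound of the form $\varepsilon^{-d-2}\|\nabla_v f\|_{L^1(Q_5)}$, the $\varepsilon^{-d-2}$ factor tracking the $L^\infty$ cost of the bump together with the Jacobian losses in the changes of variable relating $\tilde v$ to the velocity argument of $\nabla_v f$ (the same prefactor will also appear in $(B)$). The position leg $(C)$ is a spatial shift of $f$ of size at most $3\varepsilon$; its $\phi_\varepsilon$-average is a mollification of $f$ in $x$ at scale $\varepsilon$, which Proposition~\ref{prop:Gain sol} bounds by
\begin{equation*}
\varepsilon^\sigma \bigl(\tfrac{1}{3}-\sigma\bigr)^{-1}\bigl[\|f\|_{L^2(Q_5)}+\|S\|_{L^2(Q_5)}\bigr],
\end{equation*}
providing the second and third terms of \eqref{HPI inequality}.

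The main obstacle is the free-transport leg $(B)$. For smooth $f$ one has
\begin{equation*}
(B)=\int_{t_2}^{t_1}(\cT f)\bigl(\tau,\,x_1-(t_1-\tau)\tilde v,\,\tilde v\bigr)\,d\tau,
\end{equation*}
so multiplying by $\phi_\varepsilon(\tilde v)$ and invoking the sub-solution inequality allows me to replace $\cT f$ by $\nabla_v\cdot(A\nabla_v f)+B\cdot\nabla_v f+S$. The delicate point is the integration by parts in $\tilde v$: I want to transfer the $v$-derivative off $A\nabla_v f$ onto $\phi_\varepsilon$ at a cost $\varepsilon^{-d-1}$, but the chain rule produces a cross-term because the position slot $x_1-(t_1-\tau)\tilde v$ of $f$ also depends on $\tilde v$, contributing an extra $(t_1-\tau)\nabla_x f$. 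That cross-term is absorbed exactly as for $(C)$, by the $W^{\sigma,1}_x$ bound of Proposition~\ref{prop:Gain sol}; the genuine $\nabla_v\phi_\varepsilon$ term yields the claimed $\varepsilon^{-d-2}$ prefactor once the $\int_{t_2}^{t_1}d\tau$ time-weighting is tracked. The source $S$ contributes the $\|S\|_{L^2(Q_5)}$ piece by Cauchy--Schwarz, and the lower-order $B\cdot\nabla_v f$ term folds into the $\|\nabla_v f\|_{L^1}$ contribution. Collecting everything and applying $(a+b+c+d)_+\leq |a|+(b)_++|c|+|d|$ before integrating in $z_1,z_2$ produces \eqref{HPI inequality}. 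The hard part will be precisely this handling of $(B)$: making rigorous sense of the integration by parts in $\tilde v$ given the dual role of $\tilde v$ as both a velocity and a characteristic parameter, and checking that the resulting cross-term fits cleanly into the $W^{\sigma,1}_x$ framework provided by Proposition~\ref{prop:Gain sol}.
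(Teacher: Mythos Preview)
Your architecture is right---split the increment along a broken characteristic, control the velocity legs by $\|\nabla_v f\|_{L^1}$, the short $x$-shift by the $W^{\sigma,1}_x$ estimate, and the transport leg via the equation---but the handling of $(B)$ contains a genuine error that the rest of the argument cannot repair.

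The cross-term produced by the chain rule is mis-identified. When you rewrite the $v$-divergence as a $\tilde v$-divergence, what is left over is
\[
(t_1-\tau)\,\nabla_x\!\cdot\!\bigl(A\nabla_v f\bigr)\bigl(\tau,\,x_1-(t_1-\tau)\tilde v,\,\tilde v\bigr),
\]
\emph{not} $(t_1-\tau)\nabla_x f$. Proposition~\ref{prop:Gain sol} yields $L^1_{t,v}W^{\sigma,1}_x$ regularity for $f$, but gives no fractional $x$-regularity for $A\nabla_v f$; with merely measurable $A$ there is no reason to expect any. So the plan ``absorb this cross-term exactly as for $(C)$'' fails: you would need a $W^{\sigma,1}_x$ bound on $A\nabla_v f$, which you do not have. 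A single $d$-dimensional bump $\phi_\varepsilon(\tilde v)$ cannot decouple the position and velocity slots of $f$ along the characteristic, because both slots depend on $\tilde v$.

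The paper's device is precisely designed to avoid this coupling. Rather than introduce an auxiliary velocity, it places the cutoff $\varphi_\varepsilon$ on the $Q_1^-$ variables $(y,w)$ \emph{jointly}, and---this is the key trick---first shifts the starting position by $\varepsilon w$ before free-streaming. The intermediate velocity is then $W=\frac{x+\varepsilon w-y}{t-s}$, and the $2d$-to-$2d$ change of variables $(y,w)\mapsto(Y,W)$ with $Y=\tau(x+\varepsilon w)+(1-\tau)y$ renders the position and velocity arguments of $f$ along the transport leg \emph{independent} integration variables: $f$ is evaluated at $(\cdot,Y,W)$. Integration by parts in $W$ then lands entirely on $\varphi_\varepsilon$ (which vanishes on the boundary of the image domain) and generates no $\nabla_x$ term whatsoever. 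The $W^{\sigma,1}_x$ estimate is invoked only on the initial $\varepsilon w$ shift, which involves $f$ alone---exactly your term $(C)$, relocated to the $Q_1$ endpoint. To fix your argument you must introduce a second independent averaging variable so that, after a change of variables, the position slot of $f$ in the transport leg no longer depends on the velocity-integration variable; the $\varepsilon w$ perturbation is the natural way to do this.
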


\begin{remark}
  The motivation for the following argument was \cite[Lemma~10,
  p.11]{vasseur} where a simple quantitative proof of the
  intermediate value Lemma of De Giorgi (also sometimes called De
  Giorgi's isoperimetric inequality) is sketched in the elliptic
  case, based on introducing the trajectory between two points of
  the domain and using the vector field $\nabla_v$ to connect
  them. We have to deal here with the hypoelliptic structure.
\end{remark}

\begin{proof}
  Consider, for $\var \in (0,1)$, a smooth function
  $\varphi_\var=\varphi_\var(y,w)$ which satisfies
  $0\leq \varphi_\var \leq 1$ and has compact support in
  $B_1^{2}$ and such that $\varphi_\var=1$ in
  $B_{(1-\var)}\times B_{(1-\var)}$ and with
  $|\nabla_y \varphi_\var|\lesssim \var^{-1}$ and
  $|\nabla_w \varphi_\var|\lesssim \var^{-1}$. We then split the
  integral to be estimated as follows
  \begin{align*}
    & \norm{\left(f-\langle f
      \rangle_{Q_1^{-}}\right)_+}_{L^{1}(Q_1)}  \lesssim
      \norm{\left(f-\langle f \varphi_\var
      \rangle_{Q_1^{-}}\right)_+}_{L^{1}(Q_1)}\\
    & \lesssim
      \int_{(t,x,v) \in Q_1}
      \left\{ \fint_{(s,y,w) \in Q_1^-} \left[
      f(t,x,v)-f(s,y,w) \right] \varphi_\var(y,
      w) \right\}_+  +  \norm{f}_{L^1(Q_1)}
      \fint_{Q_{1}^{-}}\left( 1 -  \varphi_\var(y,w) \right)\\
    & \lesssim \int_{(t,x,v) \in Q_1}
      \left\{ \fint_{(s,y,w) \in Q_1^-} \left[
      f(t,x,v)-f(s,y,w) \right] \varphi_\var(y,
      w) \right\}_+  + \var^{2d} \norm{f}_{L^2(Q_1)} 
  \end{align*}
  where we have used
  $\langle f \varphi_\var \rangle_{Q_1^{-}}\leq \langle f
  \rangle_{Q_1^{-}}$ and the Cauchy-Schwarz inequality.

  Let us estimate the first term of the previous
  inequality. Given $t,x,v$ fixed, we decompose the trajectory
  $(t,x,v)\rightarrow (s,y,w)$ into four sub-trajectories in
  $Q_5$: a trajectory of length $O(\var)$ along $\nabla_x$, two
  trajectories of length $O(1)$ along $\nabla_v$, and finally one
  trajectory of length $O(1)$ along
  $\cT := \partial_t + v \cdot \nabla_x$:
  \begin{equation*}
    (t,x,v)\underset{\nabla_x}{\longrightarrow}
    (t,x+\varepsilon w,v) \underset{\nabla_v}{\longrightarrow}
    \left(t,x+\varepsilon w, \frac{x+\varepsilon w -y}{t-s}\right)
    \underset{\cT}{\longrightarrow} \left(s,
      y,\frac{x+\varepsilon w -y}{t-s}\right)
    \underset{\nabla_v}{\longrightarrow} (s,y,w).
  \end{equation*}
  The first sub-trajectory is estimated by the {\it integral}
  regularity $L^1_{t,v} W^{\sigma,1}_{x}$ proved
  in~\eqref{eq:gain-reg}. The other trajectories are estimated
  directly by the vector fields in the equation. The position
  $x+\varepsilon w \in Q_2$ since $x,w \in B_1$ and
  $\var \in (0,1)$. The velocity
  $\frac{x+\varepsilon w -y}{t-s} \in Q_3$ since $x,w,y \in B_1$
  and $t-s\geq 1$ due to the definitions of $Q_{1}^+$ and
  $Q_{1}^{-}$, and this velocity yields a transport line from
  $(t,x+\varepsilon w)$ to $(s,y)$. Note that we are implicitly
  using the H\"{o}rmander commutator condition:
  $\nabla_v,\cT, [\nabla_v,\cT]$ span all the vector fields on
  $\R^{2d+1}$.

  Decompose along the previous trajectories
  \begin{align*}
    & f(t,x,v)-f(s,y,w)
      =  \Big[ f(t,x,v)-f(t,x+\varepsilon w,v) \Big]  +
      \left[ f(t,x+\varepsilon
      w,v)- f\left(t,x+\varepsilon w,\frac{x+\varepsilon w -y}{t-s}
      \right) \right] \\
    & \qquad + \left[ f\left(t,x+\varepsilon w,\frac{x+\varepsilon w
      -y}{t-s}\right)- f\left(s,y,\frac{x+\varepsilon w
      -y}{t-s}\right) \right] 
      + \left[ f\left(s,y,\frac{x+\varepsilon w
      -y}{t-s}\right)-f(s,y,w) \right]
  \end{align*}
  and integrate against $\varphi_\var(y,w)$ on
  $(s,y,w)\in Q_{1}^-$, which gives the four terms
  \begin{align*}
    I_1(t,x,v)
    & := \int_{(s,y,w)\in Q_1^{-}}
      \Big[ f(t,x,v)-f(t,x+\varepsilon
      w,v)\Big] \varphi_\var(y,w),\\
    I_2(t,x,v)
    & := \int_{(s,y,w)\in Q_1^{-}}
      \left[ f(t,x+\varepsilon w,v)-f\left(t,x+\varepsilon
      w,\frac{x+\varepsilon w -y}{t-s}\right)\right] 
      \varphi_\var(y,w), \\
    I_3(t,x,v)
    & := \int_{(s,y,w)\in Q_1^{-}} \left[ f\left(t,x+\varepsilon
      w,\frac{x+\varepsilon w
      -y}{t-s}\right)-f\left(s,y,\frac{x+\varepsilon w
      -y}{t-s}\right)\right] \varphi_\var(y,w), \\
    I_4(t,x,v)
    & := \int_{(s,y,w)\in Q_1^{-}}
      \left[ f\left(s,y,\frac{x+\varepsilon w
      -y}{t-s}\right)-f(s,y,w)\right] \varphi_\var(y,w).
  \end{align*}

  Regarding the term $I_2$, we use Taylor's formula and
  $0\leq \varphi_\var\leq 1$ to deduce
  \begin{align*}
    I_2(t,x,v)
    &\leq \int_{(s,y,w) \in Q_1^{-}} \int_{\tau\in [0,1]}
      \left(v-\frac{x+\varepsilon w-y}{t-s}\right)\cdot \nabla_v
      f \left(t,x+\varepsilon w, \tau v +
      (1-\tau)\frac{x+\varepsilon w-y}{t-s} \right)
      \varphi_\var(y,w) \\
    &\lesssim \int_{(s,y,w) \in Q_1^{-}} \int_{\tau\in [0,1]}
      |\nabla_v f| \left(t,x+\varepsilon w, \tau v +
      (1-\tau)\frac{x+\varepsilon w-y}{t-s} \right). 
  \end{align*} 
  Integrate then on $(t,x,v)\in Q_{1}^+$ to get
  \begin{align}
    \label{estim I2}
    \nonumber
    \fint_{(t,x,v) \in Q_1} I_2
    &\lesssim \int_{(t,X,v)\in (-1,0)\times B_2 \times
      B_1} \int_{(s,Y,w)\in (-3,-2)\times B_4 \times B_1}
      \int_{\tau\in (0,1)} |\nabla_v f| \left(t,X, v+ (1-\tau) Y
      \right) \\
    & \lesssim \int_{(t,X,V)\in (-1,0)\times B_2 \times
      B_5} \int_{(s,Y,w)\in (-3,-2)\times B_4 \times B_1}
      \int_{\tau\in (0,1)} |\nabla_v f| \left(t,X, V\right)
      \lesssim \int_{Q_5}  |\nabla_v f|,
  \end{align}
  where we have used successively the following changes of
  variables with bounded Jacobians:
  \begin{equation*}
    x\rightarrow X=x+\varepsilon w \in B_2, \qquad 
    y\rightarrow Y= \frac{X -y}{t-s} -v\in B_4, \qquad
    v\rightarrow V=v+(1-\tau)Y\in B_5.
  \end{equation*}

  The term $I_4$ is treated like $I_2$:
  \begin{align}
    \label{estim I4}
    \fint_{(t,x,v) \in Q_1} I_4 \lesssim \int_{Q_5}  |\nabla_v f|,
  \end{align}

  Regarding the term $I_1 $, we perform the change of variable
  $w \in B_1 \rightarrow x'=x+\varepsilon w \in B_\var(x)$ with
  Jacobian $\var^{-d}$ and use the $L^1_{t,v} W^{\sigma,1}_{x}$
  regularity of non-negative sub-solutions proved
  in~\eqref{eq:gain-reg}:
  \begin{align}
    \label{estim I1}
    \nonumber  \fint_{(t,x,v)\in Q_1}  I_1
    & \lesssim \int_{(t,x,v)\in Q_1, \, (s,y,w)\in Q_1^{-}}
      |f(t,x,v)-f(t,x+\varepsilon w,v) | \\
    \nonumber
    &\lesssim \int_{(t,x,v)\in Q_1, \, w \in B_1}
      \frac{|f(t,x,v)-f(t,x+\varepsilon w,v)|}{|\varepsilon w
      |^{d+\sigma}}|\varepsilon w|^{d+\sigma} \\
    \nonumber
    &\lesssim \var^\sigma \int_{(t,x,v)\in Q_1, \, x' \in B_2 }
      \frac{|f(t,x,v)-f(t,x',v)|}{|x-x'|^{d+\sigma}} \\
    & \lesssim \var^\sigma \norm{f}_{L^1_{t,v}W^{\sigma,1}_x(Q_2)}
      \lesssim \var^\sigma \left( \frac13 -\sigma \right)^{-1}
      \left[ \norm{f}_{L^2(Q_3)} + \norm{S}_{L^2(Q_3)} \right]. 
  \end{align}

  Regarding the term $I_3$, we note first that
  $\cT f \in L^2_{t,x} H^{-1}_v + M^1_{t,x,v}$ with finite norm
  in $Q_R(z_0)$ (arguing as in proof of
  Proposition~\ref{prop:Gain sol}). The Taylor formula between
  $(t,x+\varepsilon w)$ and $(s,y)$ along $\cT$ thus holds in
  weak form against $\varphi_\var$ thanks to the latter bounds
  and the non-singular change of
  variable~\eqref{eq:change-variables} discussed below:
  \begin{align}
    \nonumber
    I_3(t,x,v)
    & =  \int_{(s,y,w)\in Q_1^{-}} \left[
      f\left(t,x+\varepsilon w,\frac{x+\varepsilon w
      -y}{t-s}\right)-f\left(s,y,\frac{x+\varepsilon w
      -y}{t-s}\right)\right] \varphi_\var(y,w)\\
    \label{eq:Taylor-weak}
    & \lesssim \int_{(s,y,w)\in Q_1^{-}} \int_{\tau \in
      [0,1]} (t-s)\cT
      f\left(\tau t+(1-\tau)s,\tau (x+\varepsilon
      w)+(1-\tau)y,\frac{x+\varepsilon w
      -y}{t-s}\right)\varphi_\var(y,w).
  \end{align}
  We then use the fact that $f$ is a sub-solution
  to~\eqref{e:main} in the distributional sense:
  \begin{align*}
    I_3(t,x,v)
    & \lesssim \int_{(s,y,w)\in Q_1^{-}} \int_{\tau \in
      [0,1]} (t-s)\nabla_v \cdot (A \nabla_v f)\left(\tau
      t+(1-\tau)s,\tau (x+\varepsilon
      w)+(1-\tau)y,\frac{x+\varepsilon w
      -y}{t-s}\right)\varphi_\var(y,w)\\
    & \quad +\int_{(s,y,w)\in Q_1^{-}} \int_{\tau \in
      [0,1]} (t-s)B\cdot \nabla_v f\left(\tau t+(1-\tau)s,\tau
      (x+\varepsilon w)+(1-\tau)y,\frac{x+\varepsilon w
      -y}{t-s}\right)\varphi_\var(y,w)\\
    & \quad + \int_{(s,y,w)\in Q_1^{-}} \int_{\tau \in
      [0,1]} (t-s)S\left(\tau t+(1-\tau)s,\tau (x+\varepsilon
      w)+(1-\tau)y,\frac{x+\varepsilon w
      -y}{t-s}\right)\varphi_\var(y,w)\\
    & := I_{31}+I_{32}+I_{33}. 
  \end{align*}
  Arguing as for $I_2$ and $I_4$, we have
  \begin{align}
    \label{estim I323}
    \fint_{(t,x,v) \in Q_1} I_{32} \lesssim \int_{Q_5} \left|
    \nabla_v f\right| \quad \text{ and } \quad
    \fint_{(t,x,v) \in Q_1} I_{33} \lesssim \int_{Q_5} |S|,
  \end{align}
  where we performed consecutively the changes of variable
  $y\rightarrow V=\frac{x+\varepsilon w-y}{t-s}$,
  $x\rightarrow X= x+\varepsilon w -(1-\tau)(t-s)V$,
  $s\rightarrow s'=t-s$ and $t'\rightarrow t-(1-\tau)s'$.  To
  estimate the remaining term $I_{31}$, we use the change of
  variable
  \begin{equation}
    \label{eq:change-variables}
    (y,w) \mapsto (Y,W) \quad \text{ with } \quad
    Y:=\tau (x+\varepsilon w)+(1-\tau)y \quad \text{
      and } \quad W:=\frac{x+\varepsilon w -y}{t-s} 
  \end{equation}
  such that $(y,w)\mapsto (Y,W)$ is a bijection from the set
  $(B_1)^2$ to the (diamond-shaped) set
  \begin{equation*}
    E:=E(\tau,\varepsilon,t,s,x) \subset B\left(\tau x, (1-\tau)
      + \tau \var\right)
    \times B\left(\frac{x}{t-s},\frac{1+\var}{t-s}\right) \
    \subset \ B_2 \times B_3
  \end{equation*}
  with Jacobian $(\frac{\var}{t-s})^d$ and which maps
  respective boundaries (to compute the Jacobian easily use the
  formula
  $\det ( \begin{smallmatrix} A & B \\ C & D \end{smallmatrix}) =
  \det(A - B D^{-1} C) \det D$). We deduce 
  \begin{align*}
    I_{31} = \frac{1}{\varepsilon^d} \int_{\tau\in
    [0,1]}\int_{s\in (-3,-2), \, (Y,W)\in E}
    (t-s)^{d+1}\nabla_v\cdot (A
    \nabla_v f)\left(\tau
    t+(1-\tau)s,Y,W\right)\\
    \times \varphi_{\varepsilon} \left(Y-\tau
    (t-s)W,\frac{Y-x+(1-\tau)(t-s)W}{\varepsilon} \right)
  \end{align*}
  and we integrate by parts in $W$, using that $\varphi_\var=0$
  on the boundary of $E(\tau,\var,t,s,x)$:
  \begin{align*}
    I_{31}= \
    & \frac{1}{\varepsilon^d}\int_{\tau \in
      [0,1]}\int_{s\in (-3,-2), (Y,W)\in E}  (t-s)^{d+1}(A \nabla_v
      f)\left(\tau t+(1-\tau)s,Y,W\right) \\
    & \qquad \times \bigg[\tau (t-s)\nabla_y\varphi_\var
      \left(Y - \tau (t-s)W,\frac{Y-x+(1-\tau)(t-s)W}{\varepsilon}
      \right)\\
    & \quad \qquad
      -\frac{(1-\tau)(t-s)}{\var}\nabla_w\varphi_\var
      \left(Y-\tau (t-s)W,\frac{Y-x+(1-\tau)(t-s)W}{\varepsilon}
      \right)\bigg].
  \end{align*} 
  Using the bounds on the derivatives of $\varphi_\var$ then
  yields
  \begin{align}
    I_{31}(t,x,v) \lesssim \frac{1}{ \varepsilon^{d+2}}
    \int_{\tau \in [0,1]}\int_{s\in
    (-3,-2), (Y,W)\in E}   |\nabla_v f|\left(\tau
    t+(1-\tau)s,Y,W\right) \quad \Rightarrow \quad
    \label{estim I31}
    \fint_{Q_1} I_{31} \lesssim
    \frac{1}{ \varepsilon^{d+2}} \int_{Q_3}   |\nabla_v f|.
  \end{align}
  The result follows from combining \eqref{estim I2},
  \eqref{estim I4}, \eqref{estim I1}, \eqref{estim I323}
  and~\eqref{estim I31}.
\end{proof}

\begin{remark}
  Note that the regularity $W^{\sigma,1}_{x}$ is only used over a
  small trajectory that ``noises'' the position variable $x$ in
  $Q_1$ with the velocity $w$ in $Q_1^-$, hence allowing to
  integrate by parts the diffusion operator using \emph{only} the
  variables in $Q_1^-$. Note also that it is possible to get some
  $W^{\sigma',1}_{t,x,v}$ regularity in all variable with
  $\sigma' \in (0,\sigma)$ small by the same method as in
  Lemma~\ref{lem:Kolmogorov}, however such regularity is too weak
  to yield any intermediate value estimate alone. Note also that
  the gap in time between $Q_1^-$ and $Q_1$ is used to make sure
  the intermediate velocity $\frac{x+\var w-y}{t-s}$ remains
  bounded and the various domains of integration remain bounded
  along the velocity variable. In fact, the result is false
  without such gap, see Remark~\ref{rem:gap}.
\end{remark}

\subsection{Proof of the Intermediate Value Lemma}
\label{ss:proof-ivl}

In this subsection, we prove that Proposition~\ref{p:HPI lemma}
implies Theorem~\ref{t:IVL}. Take $f$ a sub-solution
to~\eqref{e:main}-\eqref{e:hyp-coef} on $Q_1$ and
satisfying~\eqref{e: hyp IVL} for some given
$\delta_1,\delta_2>0$:
\begin{align}
  \label{hyp bis IVL}
  |\{f\leq 0\}\cap Q_{r_0}^{-}|  \geq  \delta_1 |Q_{r_0}^{-}|
  \quad \text{ and }\ \quad 
  |\{f\geq 1-\theta\} \cap Q_{r_0}|  \geq  \delta_2 |Q_{r_0}|.
\end{align}
Define $g:=f-(t+25r_0^2) \|S\|_{L^\infty(Q_1)}$.  Then its
positive part $g_+$ is a sub-solution
to~\eqref{e:main}-\eqref{e:hyp-coef} in $Q_{5r_0}$ with zero
source term and with $g_+ \in [0,1]$ since $f\le 1$ in
$Q_{\frac{1}{2}}$. We set
$r_0 = \left(\frac{\delta_1}{400
    (1+\|S\|_{L^\infty(Q_1)})}\right)^{\frac12} \leq
\frac{1}{20}$ if $S$ non-zero and $r_0=\frac{1}{20}$ if $S=0$,
and we apply~\eqref{HPI inequality} to $g_+$ at scale $r_0$, for
some $\var>0$ to be chosen later:
\begin{equation}
  \label{Poinc in proof}
  \fint_{Q_{r_0}} \left(g_+-\langle
    g_+\rangle_{Q_{r_0}^{-}}\right)_{+} \lesssim
  \frac{r_0}{\varepsilon^{d+2}} \fint_{Q_{5r_0}} |\nabla_v g_+|
  + \var^\sigma  \left( \fint_{Q_{5r_0}} g_+^2 \right)^{\frac12}
  \lesssim
  \frac{1}{r_0^{4d+1} \varepsilon^{d+2}} \int_{Q_{5r_0}} |\nabla_v g_+|
  + \var^\sigma
\end{equation}
where we have used the bound $g_+ \in [0,1]$ to control the $L^2$
norm.  Then~\eqref{hyp bis IVL} implies
\begin{equation}
  \label{estim averag}
  \langle g_+\rangle_{Q_{r_0}^{-}} = \fint_{(s,y,w) \in
    Q_{r_0}^-} \big[f(s,y,w) - (s+25r_0^2) \| S \|_{L^\infty(Q_1)}\big]_+
  \leq \frac{\left|{\{f>0\}\cap Q_{r_0}^-}\right|}{|Q^{-}_{r_0}|}
  \leq 1-\delta_1
\end{equation}
and 
\begin{align}
  \label{lower bd Poinc}
  \nonumber
  \fint_{Q_{r_0}} \left(g_+-\langle g_+
  \rangle_{Q_{r_0}^{-}}\right)_{+}
  &\ge \frac{1}{|Q_{r_0}|} \int_{(t,x,v) \in Q_{r_0}}
    \big[ f(t,x,v) - (t+25r_0^2) \|S\|_{L^\infty(Q_1)}
    -(1-\delta_1)\big]_{+}\\
  \nonumber
  &\ge \frac{1}{|Q_{r_0}|} \int_{(t,x,v) \in Q_{r_0}}
    \big[ f(t,x,v) - 25r_0^2 \|S\|_{L^\infty(Q_1)}
    -(1-\delta_1)\big]_{+}\\
  & \geq \frac{1}{|Q_{r_0}|}
    \int_{\{f\geq 1-\theta\}\cap Q_{r_0}}
    \left(\frac{\delta_1}{2}-\theta\right)_{+} 
    \ge \delta_2 \left(\frac{\delta_1}{2}-\theta\right).
\end{align}

We then estimate from above the right hand side of the Poincar\'e
inequality \eqref{Poinc in proof}:
\begin{equation*}
  \int_{Q_{5r_0}} |\nabla_v g_+| \le \int_{Q_{5r_0}}
  |\nabla_v f_+|
  \le  \int_{\{ f=0 \} \cap Q_{5r_0}} \cdots
  +\int_{\{ 0<f<1-\theta \} \cap Q_{5r_0}} \cdots
  + \int_{\{ f\geq 1-\theta \} \cap Q_{5r_0}} \cdots
  =:  I_1 +I_2 + I_3.
\end{equation*}
The first term $I_1=0$ since $\nabla_v f_+ = 0$ almost everywhere
on $\{f_+=0\}$
(see~\cite[Subsection~4.2.2]{MR3409135}). Combining the
Cauchy-Schwarz inequality, Proposition \ref{prop:EE} and the fact
that $f\leq 1$, we get
\begin{equation*}
  \begin{array}{lll}
    I_2
    & \leq
    & |\{0<f<1-\theta\}\cap Q_{5r_0}|^{\frac12}
      \left(\displaystyle
      \fint_{Q_{5r_0}} |\nabla_v f_+|^2 \right)^{\frac12} \\
    & \lesssim
    & |\{0<f<1-\theta\}\cap Q_{\frac12}|^{\frac12}
      \left(\displaystyle\fint_{Q_{\frac12}} f_+^2 \right)^{\frac12}
      \lesssim  |\{0<f<1-\theta\}\cap Q_{\frac12}|^{\frac12}
  \end{array}
\end{equation*}
and (using that $\nabla_v f$ is zero almost everywhere on
$\{ f = \text{cst} \}$, see
again~\cite[Subsection~4.2.2]{MR3409135})
\begin{eqnarray*}
  I_3 &=
  &  \int_{Q_{5r_0}} \big|\nabla_v
    \big[(f-(1-\theta))_+ +(1-\theta) \big]\big|
    = \int_{Q_{5r_0}}
    \big|\nabla_v \big[f-(1-\theta)\big]_+\big| 
    \lesssim \left( \int_{Q_{5r_0}}
    \big|\nabla_v \big[f-(1-\theta)\big]_+\big|^2
    \right)^{\frac12} \\
      &\lesssim
  &  \left( \int_{Q_{\frac12}}
    \big[f-(1-\theta)\big]_+^2 
    + \int_{Q_{\frac12}}
    \big[f-(1-\theta)\big]_+ |S| \right)^{\frac12}
    \lesssim  \theta + \theta^{\frac12}\|S\|_{L^\infty(Q_1)}
    \lesssim \theta^{\frac12}\left(1+\|S\|_{L^\infty(Q_1)}\right)
\end{eqnarray*}
where we have used the energy estimate in
Proposition~\ref{prop:EE} on $[f-(1-\theta)]_+$.

The last two estimates on $I_2$ and $I_3$ yield the following
control on the right hand side of~\eqref{Poinc in proof}:
\begin{equation}
  \label{upp bd Poinc}
  \frac{1}{r_0^{4d+1} \varepsilon^{d+2}}
  \int_{Q_{5r_0}} |\nabla_v g_+| +
  \varepsilon^\sigma \lesssim
  \frac{\theta^{\frac12}\left(1+\|S\|_{L^\infty(Q_1)}\right)}{r_0^{4d+1}
    \varepsilon^{d+2}}
  + \frac{|\{0<f<1-\theta\}\cap
    Q_{\frac12}|^{\frac12}}{r_0^{4d+1}\varepsilon^{d+2}}
  + \varepsilon^\sigma.
\end{equation}

Combining \eqref{lower bd Poinc} and \eqref{upp bd Poinc} gives,
for some universal constant $C\geq 1$:
\begin{equation}
  \label{estim finale}
  \frac{\delta_1 \delta_2}{2} \le \delta_2 \theta +
  \frac{C\left(1+\|S\|_{L^\infty(Q_1)} \right)
    \theta^{\frac12}}{r_0^{4d+1} \varepsilon^{d+2}} 
  + \frac{C|\{0<f<1-\theta\}\cap
    Q_{\frac12}|^{\frac12}}{r_0^{4d+1} \var^{d+2}} + C \varepsilon^\sigma.
\end{equation}
We choose $\varepsilon$ such that
$C\varepsilon^\sigma \leq \frac{\delta_1 \delta_2}{8}$ and
$\theta$ such that
$\delta_2 \theta + \frac{C\left(1+\|S\|_{L^\infty(Q_1)}\right)
  \theta^{\frac12}}{r_0^{4d+1}\var^{d+2}} \leq \frac{\delta_1
  \delta_2}{8}$, e.g.
\begin{equation}
  \label{eq:choice-theta}
  \varepsilon=\left(\frac{\delta_1\delta_2}{8C}\right)^{\frac{1}{\sigma}},
  \qquad
  \theta= \delta_1^2 \delta_2 ^2 \left[ 8\left( \delta_2 +
        \frac{C\left(1+\|S\|_{L^\infty(Q_1)}\right)}{r_0^{4d+1} \left(
            \frac{\delta_1\delta_2}{8C}
          \right)^{\frac{d+2}{\sigma}}}\right) \right]^{-2},
\end{equation}
which finally implies the result with
\begin{equation}
  \label{eq:choice-nu}
  \nu:= \frac{1}{|Q_{\frac12}|}\left(\frac{\delta_1\delta_2}{4C}
    \left(\frac{\delta_1\delta_2}{8C}\right)^{\frac{d+2}{\sigma}}
    r_0^{4d+1}\right)^2 \gtrsim
  \frac{\left( \delta_1 \delta_2
    \right)^{10d+16}}{\left(1+\|S\|_{L^\infty(Q_1)}\right)^{4d+2}}.
\end{equation}

\subsection{Measure-to-pointwise estimate}
\label{ss:m2p}

In this subsection, we combine Proposition \ref{prop:1st lemma}
and Theorem \ref{t:IVL} to prove a measure-to-pointwise estimate
of ``lowering of the maximum'' \`a la De Giorgi.

\begin{lemma}[Measure-to-pointwise upper bound]
  \label{l:increase}
  Given $\delta \in (0,1)$, define
  $r_0 = (\frac{\delta}{800})^{\frac12}$ if $S$ non-zero and
  $r_0=\frac{1}{20}$ if $S=0$. There is
  $\mu:=\mu(\delta)\sim \delta^{2 (1+\delta^{-10d-16})}>0$ such
  that any sub-solution $f$ to~\eqref{e:main}-\eqref{e:hyp-coef}
  in $Q_1$ with $S$ such that
  $\| S\|_{L^{\infty}(Q_1)}\le \mu$ and so that $f\leq 1$ in
  $Q_{\frac{1}{2}}$ and
  \begin{align}
    \label{eq:measure-hyp-pointwise}
    \left| \{ f \leq 0 \} \cap Q_{r_0}^- \right| \ge \delta
    \left| Q_{r_0}^- \right|
  \end{align}
  satisfies $f \le 1-\mu$ in $Q_{\frac{r_0}{2}}$, with
  $Q_{r_0}^{-} := Q_{r_0}(-2r_0^2,0,0) =(-3r_0^2,-2r_0^2] \times
  B_{r_0^3} \times B_{r_0}$ (see Figure~\ref{fig:LVI}).
\end{lemma}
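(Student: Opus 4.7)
The plan is to prove Lemma~\ref{l:increase} by combining the intermediate-value Lemma (Theorem~\ref{t:IVL}) iteratively with the $L^\zeta\to L^\infty$ gain (Proposition~\ref{prop:1st lemma}), in the spirit of De Giorgi's second lemma. Fixing $\delta_1=\delta_2=\delta$ in Theorem~\ref{t:IVL}, I obtain constants $\theta(\delta)\in(0,1)$ and $\nu(\delta)\in(0,1)$ compatible with the announced value of $r_0$. I then introduce the increasing sequence of levels $\ell_k:=1-(1-\theta)^k$ and the rescaled sub-solutions $f_k:=(f-\ell_k)/(1-\ell_k)$, which are weak sub-solutions of \eqref{e:main}--\eqref{e:hyp-coef} with source $S/(1-\ell_k)$. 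These satisfy $f_k\le 1$ on $Q_{1/2}$, and they inherit the lower-level measure hypothesis $|\{f_k\le 0\}\cap Q_{r_0}^-|\ge \delta|Q_{r_0}^-|$, since $\ell_k\ge 0$ gives $\{f\le 0\}\subset\{f\le \ell_k\}=\{f_k\le 0\}$.

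Next I iterate Theorem~\ref{t:IVL}: for each $k$, either the upper-measure hypothesis $|\{f_k\ge 1-\theta\}\cap Q_{r_0}|\ge\delta|Q_{r_0}|$ holds, and I obtain a set $E_k:=\{\ell_k<f<\ell_{k+1}\}\cap Q_{1/2}$ of measure at least $\nu|Q_{1/2}|$; or the hypothesis fails and I stop. Since the $E_k$ are pairwise disjoint by construction, their union has measure at most $|Q_{1/2}|$, so the iteration must terminate at some $k^*\le \lceil 1/\nu\rceil$ with $|\{f\ge\ell_{k^*+1}\}\cap Q_{r_0}|<\delta|Q_{r_0}|$. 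The smallness assumption $\|S\|_\infty\le \mu$ is used here to guarantee that the rescaled source $\|S/(1-\ell_k)\|_\infty\le \|S\|_\infty (1-\theta)^{-1/\nu}$ remains under the threshold required for Theorem~\ref{t:IVL} to apply uniformly with the same constants across every step.

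The final step converts this small-measure estimate at level $\ell_{k^*+1}$ into the pointwise bound $f\le 1-\mu$ on $Q_{r_0/2}$ via Proposition~\ref{prop:1st lemma}. Writing $h:=(f-\ell_{k^*+1})_+$, which is a sub-solution by Definition~\ref{d:weak} with source $S\cdot\mathbf{1}_{f\ge\ell_{k^*+1}}$ and $0\le h\le s_{k^*+1}:=(1-\theta)^{k^*+1}$ on $Q_{1/2}$, we obtain $\|h\|_{L^1(Q_{r_0})}\le s_{k^*+1}\delta|Q_{r_0}|$. I then apply Proposition~\ref{prop:1st lemma} on $Q_{r_0/2}\subset Q_{r_0}$ with a suitably chosen $\zeta>0$, deducing $\|h\|_{L^\infty(Q_{r_0/2})}\le s_{k^*+1}/2$, hence $f\le 1-s_{k^*+1}/2$ on $Q_{r_0/2}$, with $\mu\gtrsim s_{k^*+1}\ge (1-\theta)^{1+1/\nu}$; tracking the dependence of $\theta$ and $\nu$ on $\delta$ through Theorem~\ref{t:IVL} produces the announced form $\mu\sim\delta^{2(1+\delta^{-10d-16})}$.

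I expect the main technical obstacle to be the quantitative closure of the last step: a single application of Proposition~\ref{prop:1st lemma} with $\zeta=1$ does not directly close, since the scaling constant $r_0^{-5(1+4d)}$ outweighs the measure factor $\delta|Q_{r_0}|\sim \delta\cdot r_0^{5d+2}$ when $r_0\sim\sqrt{\delta}$ and $d\ge 1$. The resolution is either to take $\zeta$ small enough that $\delta^{1/\zeta}$ dominates the competing scale blow-up, with careful optimization of the exponents, or to iterate Proposition~\ref{prop:1st lemma} on nested truncations $(f-m_j)_+$ with $m_j\nearrow 1-\mu$ along nested cylinders in a Stampacchia-type scheme. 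The very small value of $\mu(\delta)$ in the statement precisely reflects the compounding cost of the two nested iterations (over levels $\ell_k$ via Theorem~\ref{t:IVL}, and over truncations $m_j$ via Proposition~\ref{prop:1st lemma}), together with the need for $\|S\|_\infty\le \mu$ to be absorbed at every step.
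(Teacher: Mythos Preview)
Your approach is essentially the paper's: iterate Theorem~\ref{t:IVL} on rescaled sub-solutions $f_k$, use disjointness of the intermediate-value sets to bound the number of steps by $O(1/\nu)$, and close with Proposition~\ref{prop:1st lemma}. The paper, however, makes one choice that dissolves the obstacle you flag at the end. Instead of taking $\delta_2=\delta$, it sets $\delta_2=\delta'$, the universal scale-invariant threshold coming from Proposition~\ref{prop:1st lemma} (by rescaling to the unit cylinder, there is $\delta'=\delta'(\lambda,\Lambda)$ such that $\fint_{Q_r} g_+^2\le\delta'$ implies $g\le\tfrac12$ on $Q_{r/2}$ for every $r$), and it phrases the dichotomy directly on the $L^2$ average. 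Either $\int_{Q_{r_0}}(f_k)_+^2\le\delta'|Q_{r_0}|$, and Proposition~\ref{prop:1st lemma} gives $f_k\le\tfrac12$ on $Q_{r_0/2}$ immediately; or $\int_{Q_{r_0}}(f_{k+1})_+^2>\delta'|Q_{r_0}|$, and since $(f_{k+1})_+\le 1$ this forces
\[
|\{f_k\ge 1-\theta\}\cap Q_{r_0}|=|\{f_{k+1}\ge 0\}\cap Q_{r_0}|\ge\int_{Q_{r_0}}(f_{k+1})_+^2>\delta'|Q_{r_0}|,
\]
which is exactly the second hypothesis of Theorem~\ref{t:IVL} with $\delta_2=\delta'$. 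No small-$\zeta$ argument and no nested Stampacchia iteration are needed; the $r_0$-powers you worry about never appear because the criterion is scale-invariant from the start.

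A minor correction: your level sequence $\ell_k=1-(1-\theta)^k$ does not telescope --- with $f_k=(f-\ell_k)/(1-\ell_k)$ one has $\{f_k\ge1-\theta\}=\{f\ge 1-\theta(1-\theta)^k\}$, not $\{f\ge\ell_{k+1}\}$, so the sets $E_k$ as you define them need not be disjoint. The paper uses $f_k:=\theta^{-k}[f-(1-\theta^k)]$, for which $\{0<f_k<1-\theta\}=\{1-\theta^k<f<1-\theta^{k+1}\}$ exactly.
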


\begin{proof}
  In view of Proposition~\ref{prop:1st lemma} and the scaling
  invariance, there is $\delta'>0$ depending only on $\lambda$
  and $\Lambda$ such that for any $r>0$, any sub-solution $f$ on
  $Q_{2r}$ so that $\int_{Q_{r}} f_{+}^2 \leq \delta' |Q_{r}|$
  satisfies $f\leq \frac{1}{2}$ in $Q_{\frac{r}{2}}$ (imposing
  $C\mu\le \frac{1}{4}$ with $C$ the universal constant in the
  estimate of Proposition~\ref{prop:1st lemma} used here). Define
  then $\nu, \theta>0$ as
  in~\eqref{eq:choice-theta}-\eqref{eq:choice-nu} with
  $\delta_1=\delta$ and $\delta_2=\delta'$ and a source term
  bounded in $L^\infty$ by $1$.

  Define $f_k := \theta^{-k}[f-(1-\theta^k)]$ for $k \ge 0$. The
  functions $f_{k}$ are sub-solutions to
  \eqref{e:main}-\eqref{e:hyp-coef} for all $k \ge 0$ with a
  source term of $L^{\infty}$ norm less than $1$ as long as
  $k\leq 1+\frac{1}{\nu}$ (assuming
  $\| S\|_{L^{\infty}(Q_1)}\le \mu$ so that
  $\| S\|_{L^{\infty}(Q_1)} \le \theta^{1+\frac{1}{\nu}}$). The
  sets $\{ 0<f_{k}<1-\theta\}= \{1-\theta^k<f<1-\theta^{k+1}\}$
  are disjoints and each $f_k$ satisfies
  \eqref{eq:measure-hyp-pointwise}. If
  $\int_{Q_{r_0}} (f_k)_{+}^2 \leq \delta' |Q_{r_0}|$ then
  $f_k\leq \frac{1}{2}$ in $Q_{\frac{r_0}{2}}$ so $f\le 1-\mu$
  with $\mu = \frac{\theta^k}{2}$ which concludes the proof.
  Consider $ 1\leq k_0 \leq 1+\nu^{-1}$ such that
  $\int_{Q_{r_0}} (f_{k})_{+}^2 > \delta'|Q_{r_0}|$ for any
  $0\leq k\leq k_0$.  Then for $0\leq k\leq k_0-1$
  \begin{align*}
    & \left|\left\{ f_{k}\geq 1-\theta \right\} \cap
      Q_{r_0}\right| = \left|\{f_{k+1}\geq
      0\} \cap Q_{r_0} \right| \geq \int_{Q_{r_0} } (f_{k+1})_{+}^2
      > \delta' |Q_{r_0}| \\
    & \left|\left\{f_{k}\leq 0\right\} \cap Q_{r_0}^{-}\right|
      \geq \left|\left\{f\leq 0 \right\}\cap Q_{r_0}^{-}\right|
      \geq \delta |Q_{r_0}^-|.
  \end{align*}
  Theorem \ref{t:IVL} for sub-solutions with source term of norm
  $L^{\infty}$ less than $1$ then implies, choosing $r_0  =
  (\frac{\delta}{800})^{\frac12}$, 
  \begin{align*}
    \left| \left\{ 0<f_{k}< 1-\theta \right\}\cap Q_{\frac12}\right|
    \geq \nu |Q_{\frac12}|.
  \end{align*}
  Summing these estimates and using the fact that the sets are
  disjoints we have
  \begin{align*}
    |Q_{\frac12}|\geq \sum_{k=0}^{k_0-1}
    \left| \left\{ 0<f_{k}< 1-\theta \right\}\cap
    Q_{\frac12} \right| \geq k_0 \nu |Q_{\frac12}|.
  \end{align*}
  So $k_0 \leq \nu^{-1}$ which ensures that source terms remain
  indeed less than one along the iteration, and we deduce
  \begin{align*}
    f \leq 1-\frac{\theta^{k_0+1}}{2}\leq
    1-\frac{\theta^{\frac{1+\nu}{\nu}}}{2} \quad \mbox{ in }
    Q_{\frac{r_0}{2}}
  \end{align*}
  which yields
  $\mu(\delta) := \frac{\theta^{1+\frac{1}{\nu}}}{2} \sim
  \delta^{2 (1+\delta^{-10d-16})}$.
\end{proof}

\section{Harnack inequalities and H\"{o}lder continuity}
\label{sec:Harnack}

\subsection{The Harnack inequalities}

To prove the weak Harnack inequality, we first assume $S=0$, and
re-introduce $S$ in the end. Without source term,
$r_0=\frac{1}{20}$ can be taken constant in the
measure-to-pointwise estimate. Consider then $h$ non-negative
super-solution to~\eqref{e:main}-\eqref{e:hyp-coef} on $Q_1$ with
$S=0$. The contraposition of Lemma~\ref{l:increase} on the
sub-solution $g := 1 - \frac{h}{M}$ then implies for any
$\delta \in (0,1)$ and $M \sim \delta^{-2 (1+\delta^{-10d-16})}$
that
\begin{equation}
  \label{eq:scaling-invariant}
  \forall \, Q_r(z) \subset Q_1 \text{ with }
  Q_{\frac{r}{2}}^{+}(z) \subset Q_1, \quad
  \frac{\left| \{ h > M \} \cap Q_{r}(z) \right|}{\left|
  Q_{r}(z) \right|}  >  \delta \quad
  \Longrightarrow \quad \inf_{Q_{\frac{r}{2}}^+(z)} h \ge 1
\end{equation}
where
$Q_{\frac{r}{2}}^{+}(z) = Q_{\frac{r}{2}}(z +(2r^2,2r^2v,0))$,
for $z=(t,x,v)$, is obtained by inverting the operation
$Q_{\frac{r}{2}}(z) \to Q_{r}^-(z)$ in Lemma~\ref{l:increase}
(noting that $Q_r^-(z)=Q_r(z-(2r^2,2r^2v,0))$). It implies
(inverting the relation $\delta \to M$ and using the layer-cake
representation) that if $\inf_{Q_{\frac{r_0}{2}}} h <1$,
\begin{align}
  \label{eq:initial}
  \forall \, M \ge 1, \quad \frac{\left| \{ h \ge M \} \cap
  Q_{r_0}^-  \right|}{\left|Q_{r_0}^-  \right|}  \lesssim
  \delta(M) =\left( \frac{1}{ \ln (1+M)}
  \right)^{\frac{1}{10d+17}} \quad \Longrightarrow \quad
  \int_{Q_{r_0}^- } \left[ \ln \left( 1+ h \right)
  \right]^{\frac{1}{10d+18}} \lesssim 1.
\end{align}
This ``point-to-measure'' estimate controls the decay of the
upper level set in the manner of a \emph{weak Harnack
  inequality}, although with a ``logarithmic'' rather than
power-law integrability. We shall now improve the integrability
to a power-law by going back to~\eqref{eq:scaling-invariant} and
performing an inductive argument inspired from the elliptic
theory~\cite{MR3565366}. Note that the logarithmic integrability
in~\eqref{eq:initial} is reminiscent of Moser's approach.

We improve inductively the control of upper level sets in the
following decreasing sequence of cylinders
\begin{equation*}
  \cQ^k:= Q_{\frac{r_0}{2}+\alpha_k} \left(-\frac{5}{2} r_0^2 +
    \frac12 \left( \frac{r_0}{2}+\alpha_k \right)^2,0,0 \right)\subset
  Q_{r_0}^{-} \quad \text{ with } \quad
  \alpha_k:=\frac{r_0}{2\times 7^{k-1}}.
\end{equation*}
These cylinders satisfy
$\tilde Q^-_{\frac{r_0}{2}} \subset \cQ^k \subset \bar \cQ^k
\subset \mathring{\cQ}^{k-1} \subset Q^- _{r_0}$ for all
$k \ge 1$. We now claim that for $\delta_0>0$ small enough (to be
chosen later), for any non-negative super-solution $h$ with
$\inf_{Q_{\frac{r_0}{2}}} h <1$ we have
\begin{align}
  \label{eq:claim}
  \forall \, k \ge 1, \quad
  \frac{\left| \{ h \ge M^k \} \cap \cQ^k \right|}{\left|
  \cQ^k  \right|}  \le \frac{\delta_0}{210^{(4d+2)k}}
\end{align}
where $M \sim \delta^{-2 (1+\delta^{-10d-16})}$ with
$\delta:=\frac{\delta_0}{210^{4d+2}}$ as
in~\eqref{eq:scaling-invariant}. Admitting first~\eqref{eq:claim}
we deduce by layer-cake representation that
there is an explicit $\zeta \gtrsim \delta_0 ^{10d+17}>0$ such
that $\int_{\tilde Q_{\frac{r_0}{2}}^{-}} h^\zeta \dd z \lesssim
1$, which implies by linearity
\begin{align*}
  \left(  \int_{\tilde Q_{\frac{r_0}{2}}^{-}} h(z)^\zeta \dd z
  \right)^{\frac{1}{\zeta}} \lesssim \inf_{Q_{\frac{r_0}{2}}} h. 
\end{align*}
This implies the weak Harnack
inequality~\eqref{eq:w-Harnack-stat} on any $f$ non-negative
super-solution to~\eqref{e:main}-\eqref{e:hyp-coef} by applying
the previous estimate to $h := f + (1+t) \|
S\|_{L^\infty(Q_1)}$. To deduce the Harnack
inequality~\eqref{eq:s-Harnack-stat} we consider $f$ a
non-negative solution to~\eqref{e:main}-\eqref{e:hyp-coef} and
combine the previous control with Proposition~\ref{prop:1st
  lemma} to get
\begin{align*}
  \sup_{\tilde Q_{\frac{r_0}{4}} ^{-}} f \lesssim
  \left( \int_{\tilde Q_{\frac{r_0}{2}} ^{-}} f(z)^\zeta \dd z
  \right)^{\frac{1}{\zeta}} +  \| S\|_{L^\infty(Q_1)} \lesssim
  \inf_{Q_{\frac{r_0}{2}}} f  +  \| S\|_{L^\infty(Q_1)} \lesssim
  \inf_{Q_{\frac{r_0}{4}}} f  +  \| S\|_{L^\infty(Q_1)}.
\end{align*}

Let us now prove the claim~\eqref{eq:claim} to conclude the
proof. The initialization $k=1$ is proved
in~\eqref{eq:initial}. Then define
$A_{k+1} := \{ h > M^{k+1} \} \cap \cQ^{k+1}$ and denote
the following translated centered cylinders
$\mathfrak C_{r}[z] :=z\circ Q_{2r}((2r^2,0,0))= z \circ
\left(-2r^2,2r^2\right]\times B_{(2r)^3} \times B_{2r}$.  Let us
construct $z_\ell=(t_\ell,x_\ell,v_\ell)\in \cQ^{k+1}$ and
$r_\ell >0$, $\ell \ge 1$, so that:
\begin{enumerate}
\item $\forall \, \ell \ge 1$, $r_\ell \in (0,\frac{\alpha_{k+1}}{15})$,
\item $\forall \, \ell \ge 1$,
  $|A_{k+1} \cap \mathfrak C_{15r_\ell}[z_\ell]| \le \delta_0
  |\mathfrak C_{15 r_\ell}[z_\ell]|$,
\item $\forall \, \ell \ge 1$,
  $|A_{k+1} \cap \mathfrak C_{r_\ell}[z_\ell]| > \delta_0
  |\mathfrak C_{r_\ell}[z_\ell]|$,
\item the cylinders $\mathfrak C_{3r_\ell}[z_\ell]$, $\ell \ge
  1$, are disjoint,
\item $A_{k+1}$ is covered by the family $\mathfrak
  C_{15r_\ell}[z_\ell]$, $\ell \ge 1$.
\end{enumerate}
Note that inverting the operation
$Q_{\frac{r}{2}}(z) \to Q_{r}^-(z)$ in Lemma~\ref{l:increase}
yields, when starting from $\mathfrak C_{r_\ell}[z_\ell]$, the
cylinder
$\mathfrak C_{r_\ell}[z_\ell]^+ :=z_\ell\circ
Q_{r_\ell}((10r_\ell^2,0,0))= z_\ell \circ
\left(9r_\ell^2,10r_\ell^2\right]\times B_{r_\ell^3} \times
B_{r_\ell}$. Note also that
$\mathfrak C_{r_\ell}[z_\ell]^+ \subset \mathfrak C_{3
  r_\ell}[z_\ell]$ and that property (1) combined with
$z_\ell \in \cQ^{k+1}$ imply
$\mathfrak C_{15r_\ell}[z_\ell] \subset \cQ^k$.  Let us prove
that the family $\mathcal F$ of cylinders $\mathfrak C_{r}[z]$
with $z \in \cQ^{k+1}$, $r \in (0,\frac{\alpha_{k+1}}{15})$ and
so that
$|A_{k+1} \cap \mathfrak C_{15r}[z]| \le \delta_0 |\mathfrak
C_{15 r}[z]|$ and
$|A_{k+1} \cap \mathfrak C_{r}[z]| > \delta_0 |\mathfrak
C_{r}[z]|$ cover $A_{k+1}$. We have, using~\eqref{eq:claim} at
the previous step $k$,
\begin{equation}
  \label{eq:constraint-varphi}
  \forall \, r \in \left(\frac{\alpha_{k+1}}{15},\alpha_{k+1}\right), \quad
  |A_{k+1} \cap \mathfrak C_{r}[z]| \le |A_{k} \cap \mathfrak
  C_{r}[z]| \le |A_{k} \cap \cQ^k| \le \frac{\delta_0}{210^{(4d+2)k}}
  |\cQ^k| \le \delta_0 |\mathfrak C_{r}[z]|.
\end{equation}
If $z \in A_{k+1}$ is not covered by $\mathcal F$ it means that
the continuous positive function
$\varphi(r) = \frac{|A_{k+1} \cap \mathfrak C_{r}[z]|}{|\mathfrak
  C_{r}[z]|}$ on $(0,+\infty)$ satisfies $\varphi(r) \le \delta_0$
or $\varphi(15r) > \delta_0$ for all
$r \in (0,\frac{\alpha_{k+1}}{15})$. The
constraint~\eqref{eq:constraint-varphi} and the continuity impose
$\varphi(r) \le \delta_0$ for all
$r \in (0,\frac{\alpha_{k+1}}{15})$. Taking $r \to 0$, a
straightforward variation of the Lebesgue differentiation theorem
then implies $z\notin A_{k+1}$ which contradicts the
assumption. Hence $A_{k+1}$ is covered by the family $\mathcal
F$. 

It implies in particular that $A_{k+1}$ is covered by the family
$\mathcal F'$ of cylinders $\mathfrak C_{3r}[z]$ with $z \in \cQ^{k+1}$,
$r \in (0,\frac{\alpha_{k+1}}{15})$ and such that
$|A_{k+1} \cap \mathfrak C_{15r}[z]| \le \delta_0 |\mathfrak C_{15
  r}[z]|$ and
$|A_{k+1} \cap \mathfrak C_{r}[z]| > \delta_0 |\mathfrak
C_{r}[z]|$. The Vitali covering lemma then gives the existence of
a countable sub-family, denoted
$(\mathfrak C_{r_\ell}[z_\ell])_{\ell \ge 1}$, such that the
$(\mathfrak C_{15 r_\ell}[z_\ell])_{\ell \ge 1}$ cover $A_{k+1}$
and the $(\mathfrak C_{3 r_\ell}[z_\ell])_{\ell \ge 1}$ are
disjoint. The Vitali lemma applies thanks to the following
property:
\begin{equation*}
  \Big[ \mathfrak C_{r_1}[z_1] \cap \mathfrak C_{r_2}[z_2] \neq
  \varnothing \ \mbox{ and } \ r_1\le 2r_2 \Big] \quad
  \Longrightarrow \quad \mathfrak C_{r_1}[z_1] \subset
  \mathfrak C_{5r_2}[z_2].
\end{equation*} 
Take $z_0=(t_0,x_0,v_0)$ in the intersection and
$z=(t,x,v)\in \mathfrak C_{r_1}[z_1]$. Inequalities
$|t-t_2|\leq 18 r_2^2$ and $|v-v_2|\leq 10r_2$ come naturally and
$|x-[x_2+2 r_2^2v_2 +(t-t_2)v_2]| \le 200r_2^3$ follows from
\begin{align*}
  & \big| x- \big[ x_2+ 2r_2^2v_2 + (t-t_2)v_2 \big] \big| \\
  &\le   \big| x- \big[ x_1+ 2r_1^2v_1 + (t-t_1)v_1 \big] \big| +
   \big| \big[ x_2+ 2r_2^2v_2 + (t-t_2)v_2 \big] - \big[ x_1+
    2r_1^2v_1 + (t-t_1)v_1 \big] \big| \\
  & \le r_1^3 + \big| \big[ x_2+ 2r_2^2v_2 + (t_0-t_2)v_2 \big] -
    \big[ x_1+ 2r_1^2v_1 + (t_0-t_1)v_1 \big] \big| + \big|
    (t-t_0)(v_2-v_1) \big| \\
  & \le 128 r_2 ^3 + \big| x_0- \big[ x_1+ 2r_1^2v_1 +
    (t_0-t_1)v_1 \big] \big| + \big| x_0- \big[ x_2+ 2r_2^2v_2 +
    (t_0-t_2)v_2 \big] \big|\le 200 r_2 ^3.
\end{align*}
This finishes constructing the covering with the properties
(1)-(2)-(3)-(4)-(5) above. Then Lemma~\ref{l:increase} applied to
each $\mathfrak C_{r_\ell}[z_\ell]$ implies
$\mathfrak C_{r_\ell}[z_\ell]^+ \subset A_k$, and the
$\mathfrak C_{r_\ell}[z_\ell]^+ \subset \mathfrak C_{3
  r_\ell}[z_\ell]$ are disjoint.  We deduce
\begin{align*}
  |A_{k+1}|
  & \le \sum_{\ell \ge 1} |A_{k+1} \cap \mathfrak C_{15
    r_\ell}[z_\ell]| \le \delta_0 \sum_{\ell \ge 1}
    |\mathfrak C_{15 r_\ell}[z_\ell]| 
   \le 15^{4d+2} \delta_0 \sum_{\ell \ge 1} |\mathfrak
    C_{r_\ell}[z_\ell]| \le 30^{4d+2} \delta_0 \sum_{\ell \ge 1}
    |\mathfrak C_{r_\ell}[z_\ell]^+| \\
  & \le 30^{4d+2} \delta_0 |A_k|
    \le \frac{30^{4d+2}\delta_0^2}{210^{(4d+2)k}}
    \le \frac{\delta_0}{210^{(4d+2)(k+1)}} \left| \cQ^{k+1}
    \right|
\end{align*}
for $\delta_0$ small enough which proves the induction
claim~\eqref{eq:claim} and concludes the proof.

\subsection{The H\"{o}lder continuity}
\label{ss:Holder}

De Giorgi's argument to H\"{o}lder continuity uses the
measure-to-pointwise Lemma~\ref{l:increase}. We briefly sketch it
in order to track the constant. H\"{o}lder regularity could also be
deduced from the Harnack inequality in
Theorem~\ref{t:harnack}. Given $f$ solution to
\eqref{e:main}-\eqref{e:hyp-coef} on $Q_2$ and
$r_0 =\frac{1}{40}$
\begin{equation}
  \label{eq:reduc-osc}
  \underset{Q_{r_0}}{\mathrm{osc}}
  \mbox{ }f \leq \left( 1- \frac{\mu}{2} \right)
  \mbox{ }\max\left(\underset{Q_{1}}{\mathrm{osc}}
    \mbox{ }f,e^{2(1+2^{10d+16})} \|S\|_{L^{\infty}(Q_2)}\right)
\end{equation}
follows from~Lemma \ref{l:increase} rescaled to $Q_2$ with
$\delta=\frac12$ and applied to whichever of $F$ or $-F$
satisfies~\eqref{eq:measure-hyp-pointwise}, where
$F : =2 [\max(\mathrm{osc}_{Q_{1}} f,
    e^{2(1+2^{10d+16})}
    \|S\|_{L^{\infty}(Q_2)})]^{-1} [f-\frac12
  (\sup_{Q_{1}} f+\inf_{Q_{1}} f)]$. By iteration we deduce
\begin{equation}
  \label{eq:iter-osc}
  \forall z_0 \in Q_{1}, \forall \, r\in \left(0,r_0\right),
  \quad
  \underset{Q_{r}(z_0)}{\mathrm{osc}} \mbox{ }f 
   \leq r^{\alpha} e^{2(1+2^{10d+16})}
    \left(1+\|S\|_{L^{\infty}(Q_2)}\right)\max\left(e^{2(1+2^{10d+16})}
      \|S\|_{L^{\infty}(Q_2)},\mbox{ }\underset{Q_1}{\mathrm{osc}}
    \mbox{ }f \right).
\end{equation}
Indeed, the following sequence of solution of
\eqref{e:main}-\eqref{e:hyp-coef} in $Q_1$
\begin{align*}
  f_{n}(\tau,y,w)= 2 
  \frac{\left( 1 - \frac{\mu}{2}
  \right)^{1-n}}{\max\left(\underset{Q_{1}}{\mathrm{osc}}
  \mbox{ }f, e^{2(1+2^{10d+16})}
  \|S\|_{L^{\infty}(Q_2)}\right)}  \,
  f\left(t_0+ r_0 ^{2n} \tau, x_0
  -r_0 ^{2n} \tau v_0 + r_0 ^{3n} y,v_0+ r_0
  ^n w \right).
\end{align*}
satisfies
$\underset{Q_{1}}{\mathrm{osc}} \mbox{ } f_{n} \leq 2
e^{2(1+2^{10d+16})}(1+\|S\|_{L^{\infty}(Q_2)})$ by induction on
$n \ge 1$ (the case $n=1$ is true by definition of $f_n$ and it
propagates thanks to~\eqref{eq:reduc-osc}). If one defines
$\alpha\in (0,1)$ such that $1-\frac{\mu}{2}=r_0^{\alpha}$
(assuming that $\mu$ is small enough), the previous induction
implies~\eqref{eq:iter-osc} by standard arguments. To deduce the
H\"{o}lder estimate between $z,z'\in Q_1$, use intermediate
points in $[z,z']$ at distance less than $r_0$ and the
estimate~\eqref{eq:iter-osc}.

\bigskip
\noindent {\bf Acknowledgements.} The authors are grateful to
C.~Imbert for the inspirational interactions, as well as for
specific help with the litterature and the comparison with the
Moser-Kru\v{z}kov approach in \cite{Guerand-Imbert}. The second
author would also like to thank L.~Silvestre who pointed out
several years ago how Kolmogorov fundamental solutions were used
in~\cite{pp} to replace averaging lemma, which was the starting
point of our Section~\ref{sec:int} (and is also used
in~\cite{MR4049224}). The authors acknowledge funding by
the ERC grant MAFRAN 2017-2022.

\bibliographystyle{amsalpha}
\bibliography{Gehring}

\end{document}